\documentclass[12pt,a4paper]{amsart}
\usepackage{amsfonts}
\usepackage{amsthm}
\usepackage{amsmath}
\usepackage{amscd}
\usepackage[latin2]{inputenc}
\usepackage{t1enc}
\usepackage[mathscr]{eucal}
\usepackage{indentfirst}
\usepackage{graphicx}
\usepackage{graphics}
\usepackage{pict2e}
\usepackage{epic}
\numberwithin{equation}{section}
\usepackage[margin=2.9cm]{geometry}
\usepackage{epstopdf} 
\usepackage{enumerate}
\usepackage{verbatim}
\usepackage{cite}
\usepackage{mathtools}
\usepackage{mathrsfs}
\usepackage[dvipsnames]{xcolor}
\usepackage{hyperref}
\usepackage{relsize}
\usepackage{esint}
\usepackage{comment}

\theoremstyle{plain}
\newtheorem{Th}{Theorem}[section]
\newtheorem{Lemma}[Th]{Lemma}
\newtheorem{Cor}[Th]{Corollary}
\newtheorem{Prop}[Th]{Proposition}
\newtheorem{Claim}[Th]{Claim}

\newtheoremstyle{named}{}{}{\itshape}{}{\bfseries}{.}{.5em}{\thmnote{#3}}
\theoremstyle{named}

\theoremstyle{definition}
\newtheorem{Def}[Th]{Definition}

\newtheorem{Rem}[Th]{Remark}

\newcommand{\area}{{\rm{area}}}
\newcommand{\vol}{{\rm{vol}}}

\definecolor{ao(english)}{rgb}{0.0, 0.5, 0.0}

\begin{document}

\title[Area bounds for CMC surfaces in hyperbolic 3-manifolds]{Area bounds for constant mean curvature surfaces in hyperbolic 3-manifolds}

\author{Ruojing Jiang}

\address{Massachusetts Institute of Technology, Department of Mathematics, Cambridge, MA 02139} 
\email{ruojingj@mit.edu}

 \subjclass[2020]{} 
 \date{}

\begin{abstract}  
On a finite-volume hyperbolic $3$-manifold, we establish an upper bound on the area of closed embedded surfaces with constant mean curvature at least one, depending on the mean curvature and the genus bounds. This area bound implies compactness for such surfaces. In particular, for Bryant surfaces with constant mean curvature equal to one, the area is bounded proportionally to the genus.
\end{abstract}

\maketitle

\section{introduction}
The theory of minimal surfaces provides a central motivation for the study of constant mean curvature (CMC) surfaces in $3$-manifolds, highlighting analogies and comparisons with minimal surfaces. In particular, essential (or incompressible) CMC surfaces are known not to exist in hyperbolic $3$-manifolds when the mean curvature satisfies $H\geq 1$, which prevents the direct use of surface subgroup methods that are standard in the minimal surface setting. Most known constructions of CMC surfaces in closed $3$-manifolds instead arise as boundaries of Caccioppoli sets, and are therefore completely compressible. A landmark result in this direction is the work of Zhou and Zhu \cite{Zhou-Zhu}, who used min-max theory to prove that for any closed manifold $M$ of dimension $3\leq n+1\leq 7$ and any $H\in\mathbb{R}$, there exists a closed, almost embedded hypersurface in $M$ with constant mean curvature $H$. An immersed hypersurface is \emph{almost embedded} if it locally decomposes into smoothly embedded components that pairwise lie on one side of each other. This condition is essential, since CMC hypersurfaces satisfy only a one-sided maximum principle. Subsequent work has clarified and extended the existence and regularity theory in various directions~\cite{ZhouZhuzPMC,Bellettini-Wickramasekera,Sarnataro-Stryker}.

Beyond existence and regularity, a natural next step is to investigate area bounds and compactness for closed CMC surfaces. This direction is inspired in part by the theory of \emph{Bryant surfaces}---surfaces in hyperbolic $3$-space with constant mean curvature equal to one---which are isometric to minimal surfaces in Euclidean $3$-space via a holomorphic parameterization analogous to the Weierstrass-Enneper representation~\cite{Bryant}. Due to this fact, a closed surface $S$ of a hyperbolic $3$-manifold $M$ obtained by projection of a Bryant surface (also referred to as a Bryant surface in this paper) may share some analogies with a closed minimal surface $\Sigma$ in a flat $3$-torus. This correspondence allows one to compare the properties of $S$ with those of $\Sigma$. For example, in a flat $3$-torus $\mathbb{T}^3$, for any $g\geq 3$, there exists an infinite sequence of embedded closed minimal surfaces of genus $g$ with area diverging to infinity. This argument was conjectured by Meeks \cite{Meeks}, who proved the genus $3$ case using the min-max method, and ultimately solved by Traizet \cite{Traizet}. These observations naturally raise the question: do Bryant surfaces---or more generally, CMC surfaces in hyperbolic manifolds---also exhibit unbounded area growth?

Our paper addresses this question from two perspectives. First, building on the compactness theorem of Meeks and Tinaglia~\cite{Meeks-Tinaglia}, which establishes uniform area bounds for closed $H$-surfaces in $\mathbb{T}^3$ with $0 <h_0\leq |H|\leq H_0$ and bounded genus, we investigate whether similar compactness and area bounds hold for closed $H$-surfaces in finite-volume hyperbolic $3$-manifolds, focusing on the case $|H| \geq 1$.

In the second part, we focus specifically on closed Bryant surfaces in closed hyperbolic $3$-manifolds. We show that such surfaces must have genus at least $3$ and are completely compressible. Using this structure, we derive explicit area bounds in terms of the genus, and extend our analysis to the case of closed Bryant surfaces in finite-volume hyperbolic manifolds.

\subsection{Main results}
Let $M$ be a hyperbolic 3-manifold of finite volume. We prove the following area bound for closed embedded CMC surfaces in $M$. This is an analogue of the case of $H$-surfaces in $\mathbb{T}^3$ with $0<|H|\leq H_0$, studied by Meeks and Tinaglia \cite{Meeks-Tinaglia}.

\begin{Th}\label{thm_area_bound}
Let $M$ be a finite-volume hyperbolic 3-manifold. Given $H_0\geq 1$ and $g_0\geq 0$, there exists a constant $C(M,H_0,g_0)>0$ that depends only on $M,H_0$ and $g_0$, such that any closed $H$-surface $S$ embedded in $M$ with $1\leq |H|\leq H_0$ and genus at most $g_0$ satisfies 
    \begin{equation*}
        \area(S)\leq C(M, H_0, g_0).
    \end{equation*}
\end{Th}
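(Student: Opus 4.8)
The plan is to establish the area bound by a monotonicity-plus-covering argument that exploits the geometry of finite-volume hyperbolic $3$-manifolds. The key structural fact is that an $H$-surface with $|H|\geq 1$ in a space of curvature $-1$ satisfies a monotonicity formula for extrinsic balls: because the mean curvature contributes a first-variation term comparable to the hyperbolic volume growth, one obtains a \emph{lower} area-density bound, namely that the area of $S$ inside any intrinsic ball of small radius $r_0$ around a point $p\in S$ is at least some universal constant $\delta(H_0)>0$. Concretely, I would first record the first-variation/monotonicity computation in $\mathbb{H}^3$ and transfer it to $M$ using that $M$ is locally isometric to $\mathbb{H}^3$; the condition $|H|\geq 1$ is exactly what keeps the density from degenerating, and the upper bound $|H|\le H_0$ controls $\delta$ from below.

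\medskip

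Second, I would reduce to a region of $M$ where such balls behave uniformly. A finite-volume hyperbolic $3$-manifold decomposes into a compact \emph{thick} part and finitely many cusp \emph{thin} parts; on the thick part the injectivity radius is bounded below by a constant $\epsilon_0(M)>0$, so intrinsic balls of radius $\min(r_0,\epsilon_0)$ in $S$ embed into embedded geodesic balls of $M$, and one can pack disjointly. The main remaining difficulty is the behavior of $S$ in the cusps: a priori $S$ could have large area accumulating where the injectivity radius of $M$ tends to zero. Here I would invoke the maximum principle against the CMC foliation of a cusp by horospherical tori (whose mean curvature is exactly $1$) to control how far $S$ can penetrate the cusp; since $|H|\geq 1$, an $H$-surface cannot remain tangent to or cross deep into the horoball region without violating the one-sided maximum principle, so $S$ is confined to a compact core $M_{\le t_0}$ determined by $H_0$. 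This confinement is, I expect, the crux of the argument.

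\medskip

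Third, with $S$ trapped in a fixed compact region $K=K(M,H_0)$ of bounded injectivity radius, I would combine the lower density bound with a genus constraint to close the estimate. Choose a maximal $r$-separated net $\{p_1,\dots,p_N\}$ on $S$ with $r=\min(r_0,\epsilon_0)/2$; the balls $B(p_i,r/2)$ are disjoint in $M$ and each carries area $\ge \delta$, while their images sit inside $K$, so $N$ is bounded in terms of $\vol(K)$ and the maximal multiplicity with which geodesic balls of $M$ cover $K$. To convert a net bound into an area bound I would cover $S$ by the balls $B(p_i,r)$ and bound the area contribution of each piece using the genus hypothesis: by an isoperimetric/Gauss--Bonnet argument each component of $S\cap B(p_i,r)$ has area controlled by $r$ and its Euler characteristic, and the total negative Euler characteristic is $\le 2g_0-2$. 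Summing over the finitely many net points yields $\area(S)\le C(M,H_0,g_0)$.

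\medskip

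The hard part, as indicated, will be the cusp confinement in the non-compact finite-volume case; in the closed case the manifold is already compact with positive injectivity radius and the net-packing estimate alone suffices. I would therefore structure the proof to treat the closed case first as a clean model and then upgrade to finite volume by adjoining the horospherical maximum-principle barrier, taking care that all constants depend only on $M$, $H_0$, and $g_0$ and not on the particular surface $S$.
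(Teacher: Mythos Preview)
Your approach has a genuine gap in the third step, and it is fatal even in the closed case. Once you have a maximal $r$-net $\{p_i\}$ on $S$ that is $r$-separated in $M$ and covers $S$ by extrinsic balls $B_M(p_i,r)$, the packing bound on $N$ is fine, but you must then bound $\area\bigl(S\cap B_M(p_i,r)\bigr)$ uniformly. Your proposed ``isoperimetric/Gauss--Bonnet'' bound does not deliver this: by the Gauss equation $K_S=-1+H^2-\tfrac12|\mathring A|^2$, so for $|H|\ge 1$ the intrinsic curvature is not bounded above by a negative constant (it is $\le 0$ only when $H=1$, with equality at umbilic points, and can be strictly positive for $H>1$). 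Hence Gauss--Bonnet on a piece $\Sigma\subset B_M(p_i,r)$ gives no control on $\area(\Sigma)$; nearly umbilic sheets (think of many almost-horospherical caps) have $K_S\approx 0$ and can pile up in a small extrinsic ball without costing any curvature or genus. The global constraint $-\chi(S)\le 2g_0-2$ does not rescue this either: disk components contribute $\chi=+1$, so their number is not bounded by the genus of $S$, and there is no inequality of the form $\area(\text{disk})\le f(r,\chi)$ absent an a priori bound on $|A|$. In short, the local extrinsic area bound is the heart of the matter, and it does not follow from density-plus-packing or from Gauss--Bonnet in the regime $|H|\ge 1$ (contrast the trivial case $|H|<1$, where Gauss--Bonnet alone does give the area bound).

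The horospherical barrier in step~2 also goes the wrong way. At the deepest point of $S$ in a cusp, $S$ lies on the \emph{concave} side of the tangent horotorus, and the comparison then yields $H_S\ge 1$ with respect to the outward normal---consistent with the hypothesis rather than contradicting it. So the maximum principle does not prevent $S$ from reaching arbitrarily far into a cusp. The paper's mechanism is quite different: rather than confinement, one first proves intrinsic curvature estimates for embedded $H$-disks with $|H|\ge1$ (analogous to the Meeks--Tinaglia theory in $\mathbb{R}^3$), and deduces that $S$ admits a one-sided $\epsilon$-neighborhood in its mean convex component with $\area(S)\le k\,\vol(N_\epsilon(S))\le k\,\vol(M)$ whenever the injectivity radius of $S$ is bounded below. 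The substantive work is then a contradiction/blow-up analysis when $\mathrm{inj}(S_n)\to 0$: one extracts a finite singular set $\mathcal Q$, classifies the rescaled limits (catenoids or finite-genus minimal surfaces in $\mathbb{R}^3$, ruling out Riemann examples and parking-garage limits), bounds $\#\mathcal Q$ by a volume argument, and closes with monotonicity near $\mathcal Q$. None of this structure is visible from a net-packing argument, and your outline does not supply a substitute for the missing local area bound.
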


The area bound implies the compactness of closed embedded CMC surfaces. To state the compactness theorem, we need the following definition. 

\begin{Def}
    An immersed surface $S\subset M$ is said to be \emph{strongly Alexandrov embedded} if the immersion extends to an injective immersion in a domain $B\subset M$ with $\partial B=S$.
\end{Def}

\begin{Th}\label{thm_compactness}
    Let $M$ be a finite-volume hyperbolic 3-manifold, and let $\{S_n\}_{n\in\mathbb{N}}$ be a sequence of closed embedded $H_n$-surfaces in $M$ with $1\leq |H_n|\leq H_0$ and genus at most $g_0\geq 0$. After passing to a subsequence, $S_n$ converges smoothly to a strongly Alexandrov embedded $H$-surface $S$ (possibly disconnected) away from a finite set of points $\mathcal{Q}$, which lie in the self-intersection set of $S$. The convergence to each component of $S$ occurs with multiplicity one. Additionally, $S$ has constant mean curvature satisfying $1\leq |H|\leq H_0$, and its genus is at most $g_0$.
\end{Th}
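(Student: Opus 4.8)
The plan is to run a curvature-concentration compactness argument, using the area bound of Theorem~\ref{thm_area_bound} as the basic a priori estimate and exploiting the condition $|H|\ge 1$ both to confine the surfaces and, decisively, to rule out higher multiplicity. First I would reduce to a problem on a fixed compact region. Since horospheres have mean curvature exactly $1$ and $|H_n|\ge 1$, sweeping horospheres into each cusp from the thin end and applying the maximum principle shows that no $S_n$ can penetrate beyond a fixed depth into the cusps of $M$; hence every $S_n$ lies in a fixed compact set $K\subset M$ of bounded geometry, exactly as in the setup of Theorem~\ref{thm_area_bound}. Orienting each $S_n$ by its mean curvature vector we may assume $H_n\ge 1$, and after passing to a subsequence $H_n\to H$ with $1\le H\le H_0$.

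Next I would extract a uniform total curvature bound. The Gauss equation gives $K_{S_n}=-1+\kappa_1\kappa_2$, so with Gauss--Bonnet and $\kappa_1+\kappa_2=2H_n$ one obtains $\int_{S_n}|A|^2=(4H_n^2-2)\,\area(S_n)-8\pi(1-g_n)$, which is uniformly bounded by Theorem~\ref{thm_area_bound} together with $H_n\le H_0$ and $g_n\le g_0$. With a uniform bound on $\int|A|^2$, an $\varepsilon$-regularity estimate (there exist $\varepsilon_0,r_0>0$ such that small local total curvature forces a pointwise curvature bound, as in \cite{Meeks-Tinaglia}) produces the singular set: let $\mathcal Q$ be the set of points near which at least $\varepsilon_0$ of the total curvature concentrates at every scale, so that $\#\mathcal Q\le\varepsilon_0^{-1}\sup_n\int_{S_n}|A|^2<\infty$. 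Away from $\mathcal Q$ the curvatures are uniformly bounded, so the $S_n$ are locally uniform graphs solving a quasilinear elliptic CMC equation; Schauder estimates and Arzel\`a--Ascoli yield smooth subsequential convergence on $M\setminus\mathcal Q$ to a smooth immersed surface $S$ with constant mean curvature $H$, $1\le H\le H_0$. A removable-singularity argument for CMC surfaces of locally finite area extends $S$ smoothly across the isolated points of $\mathcal Q$, and upper semicontinuity of genus under this degeneration (handles can only pinch off at $\mathcal Q$) gives $\mathrm{genus}(S)\le g_0$.

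The main obstacle, and the point where $|H|\ge 1$ is essential, is upgrading this to multiplicity-one convergence onto each component and to the strongly Alexandrov embedded structure. Suppose the convergence onto some sheet had multiplicity $m\ge 2$ with the top two sheets $u_n^{m-1}<u_n^m$ carrying coherently oriented unit normals. Their difference $w_n=u_n^m-u_n^{m-1}>0$ satisfies the equation obtained by subtracting the two CMC graph equations; after normalizing and letting $n\to\infty$ one gets a nonnegative limit $w\not\equiv 0$ in the kernel of the Jacobi operator $L_S=\Delta_S+|A|^2+\mathrm{Ric}(\nu,\nu)=\Delta_S+|A|^2-2$, and the Harnack inequality forces $w>0$. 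Integrating $L_S w=0$ over the closed surface $S$ gives $\int_S(|A|^2-2)\,w=0$; since $|A|^2\ge\tfrac12(\kappa_1+\kappa_2)^2=2H^2\ge 2$ and $w>0$, this forces $|A|^2\equiv 2$ and $H\equiv 1$, i.e. $S$ is totally umbilic with $\kappa_1=\kappa_2=1$. Such a surface lifts to a horosphere in $\mathbb H^3$ and cannot be closed, a contradiction. Thus coherently oriented sheets cannot collide, which both forces multiplicity one onto each component and shows that the only permitted collisions are between oppositely oriented sheets touching on one side.

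Finally I would assemble the global picture. Orienting each $S_n$ by its mean curvature vector and letting $B_n$ be the region it bounds, the one-sided touching just established implies that the limiting immersion is injective on the interior of $B=\lim B_n$ with $\partial B=S$, so $S$ is strongly Alexandrov embedded in the sense of the definition above. The points of $\mathcal Q$, being exactly where distinct sheets are drawn together as curvature concentrates (a single embedded sheet with no nearby sheet would have uniformly bounded curvature there), lie on the self-intersection locus of $S$. Together with the multiplicity-one convergence, the mean curvature and genus bounds on $S$, this completes the proof.
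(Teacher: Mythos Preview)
Your overall architecture---area bound from Theorem~\ref{thm_area_bound}, hence uniform $\int_{S_n}|A|^2$, hence $\varepsilon$-regularity and a finite singular set, hence smooth subsequential convergence and removable singularities---matches the paper's. Your use of $\varepsilon$-regularity to obtain $\#\mathcal Q<\infty$ directly from the total curvature bound is in fact cleaner than the paper's route, which recycles the detailed singularity classification developed inside the proof of Theorem~\ref{thm_area_bound}.

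The substantive divergence, and the place where there is a real gap, is the multiplicity-one step. The paper does not use a Jacobi field. It invokes the separation property (Lemma~\ref{lem_separating}): since $|H_n|\ge 1$, each $S_n$ bounds a strictly mean convex region $B_{S_n}$, so locally near a limit point consecutive graphical sheets of $S_n$ carry mean curvature vectors pointing in \emph{opposite} directions. If two sheets converged smoothly to a single sheet of the limit, that sheet would have mean curvature $H$ and $-H$ with respect to the same normal, impossible since $H\ge 1$. This disposes of all higher multiplicity at once and simultaneously yields the strongly Alexandrov embedded structure via $B_{S_n}\to B_S$.

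Your Jacobi field argument, by contrast, only treats two \emph{coherently} oriented sheets. Because of the separation just described, consecutive sheets are always oppositely oriented, so for multiplicity exactly two there is no coherent pair to subtract, and the clause ``the only permitted collisions are between oppositely oriented sheets touching on one side'' does not close the gap: touching at isolated points (the almost-embedded picture at $\mathcal Q$) is a different phenomenon from two sheets collapsing together on an open set. A second issue: in the cusped case the endpoint ``$S$ lifts to a horosphere and cannot be closed'' fails, since horotori are closed totally umbilic $1$-surfaces in $M$. Both problems disappear if you replace the Jacobi step by the paper's orientation argument, after which your final paragraph goes through.
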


Furthermore, for Bryant surfaces, we establish an area bound that depends explicitly on the genus.

\begin{Th}\label{thm_bryant area bound}
    Let $M$ be a closed hyperbolic 3-manifold.
    There exists a constant $C(M)>0$ depending only on $M$, such that for any closed Bryant surface $S$ immersed in $M$ with genus $g$, we obtain $g\geq 3$, and
    \begin{equation*}
        \text{area}(S)\leq C(M)g.
    \end{equation*}
\end{Th}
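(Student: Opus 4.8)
The backbone of the argument is the Gauss equation together with Gauss--Bonnet. Since $S$ has constant mean curvature one in a space of constant curvature $-1$, writing $\kappa_1,\kappa_2$ for its principal curvatures (so $\kappa_1+\kappa_2=2$), the Gauss equation gives the intrinsic curvature
\begin{equation*}
K=-1+\kappa_1\kappa_2=-\tfrac14(\kappa_1-\kappa_2)^2\leq 0,
\end{equation*}
with equality exactly at umbilic points; equivalently $|\mathring A|^2=2|K|$. Integrating and applying Gauss--Bonnet yields $\int_S|K|\,dA=-2\pi\chi(S)=4\pi(g-1)$. This single identity already forces $g\geq 1$: a genus-zero $S$ would give $\int_S K\,dA=4\pi>0$, contradicting $K\leq 0$. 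To rule out $g=1$ I would use that $\int_S K\,dA=0$ then forces $K\equiv 0$, so $S$ is totally umbilic and hence develops into a piece of a horosphere in $\mathbb{H}^3$; the deck transformations preserving this piece must fix the corresponding point at infinity and thus be parabolic, which is impossible in a closed hyperbolic manifold, so the developing map would embed the closed surface $S$ into a single horosphere $\cong\mathbb{R}^2$---absurd.

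The remaining genus exclusion, $g\neq 2$, is where I expect the first real difficulty, and here I would invoke the structural results established earlier in this part of the paper. Since essential CMC surfaces with $H\geq 1$ do not exist in hyperbolic manifolds, $S$ is compressible; strengthening this to complete compressibility (so that $S$ bounds on each side and every side compresses) and combining it with strong Alexandrov embeddedness and the one-sided maximum principle against horoballs, I would argue that a completely compressible CMC-one surface of genus $2$ cannot occur, leaving $g\geq 3$. This step is topological rather than analytic, and isolating exactly why genus $2$ fails---as opposed to the clean curvature obstructions for genus $0$ and $1$---is the crux of the lower bound.

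For the area estimate I would split $S$ according to its umbilicity. Fix a threshold $\varepsilon_0=\varepsilon_0(M)>0$ and set $S_{\mathrm{curv}}=\{|K|\geq\varepsilon_0\}$ and $S_{\mathrm{flat}}=\{|K|<\varepsilon_0\}$. On the curved part the Gauss--Bonnet identity controls the area directly,
\begin{equation*}
\area(S_{\mathrm{curv}})\leq \frac{1}{\varepsilon_0}\int_{S_{\mathrm{curv}}}|K|\,dA\leq \frac{4\pi}{\varepsilon_0}\,g,
\end{equation*}
which is already linear in the genus. The entire problem is therefore to bound $\area(S_{\mathrm{flat}})$.

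The flat part is the main obstacle, and controlling it is exactly where the compactness of $M$ must enter: purely intrinsic information cannot suffice, since large nearly-flat regions carry little total curvature. On $S_{\mathrm{flat}}$ one has $|\mathring A|^2=2|K|<2\varepsilon_0$, so each component is $C^2$-close to a piece of a horosphere. I would argue that in a closed hyperbolic manifold a nearly-horospherical piece has uniformly bounded intrinsic diameter---otherwise its lift would be an arbitrarily large almost-flat disk in $\mathbb{H}^3$ whose projection, by the bounded geometry of $M$ (positive injectivity radius, no cusps), could not remain embedded or strongly Alexandrov embedded---and hence uniformly bounded area $C(M)$. It then remains to bound the number of such flat components, which I would do by showing they are separated by the curved regions and appealing to the genus through Gauss--Bonnet on the individual pieces together with the handlebody structure coming from complete compressibility; this yields $\area(S_{\mathrm{flat}})\leq C(M)\,g$ and, combined with the curved estimate, the asserted bound $\area(S)\leq C(M)g$. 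Making the bounded-diameter step for almost-flat pieces quantitative and genus-uniform is the hardest part of the whole argument.
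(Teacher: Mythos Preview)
Your treatment of $g\leq 1$ is fine and matches the paper. The genus-$2$ exclusion, however, is a genuine gap: complete compressibility and the handlebody structure hold for \emph{every} genus, so nothing in your outline singles out $g=2$. The paper's argument is short and of a different nature: the hyperbolic Gauss map $G:S\to\mathbb{S}^2$ is holomorphic (Bryant), and its degree equals $g-1$; for $g=2$ this would make $S$ a degree-one branched cover of $\mathbb{S}^2$, hence $S\cong\mathbb{S}^2$, which is absurd. No compressibility or maximum-principle reasoning is needed here.

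The more serious gap is the area bound on $S_{\mathrm{flat}}$. Your ``bounded intrinsic diameter'' claim for nearly-horospherical pieces is not established and is in tension with the very phenomenon you cite for $g=1$: horosphere orbits in a closed hyperbolic manifold are dense, so a priori large almost-flat immersed disks can sit in $M$ with no local obstruction; the theorem is stated for \emph{immersed} $S$, so an appeal to embeddedness or strong Alexandrov embeddedness does not apply directly. Even granting a diameter bound, you still need to control the number of flat components in terms of $g$, and ``separated by the curved regions'' together with Gauss--Bonnet does not give this---$S_{\mathrm{flat}}$ could be a single connected region, or many components each bounded by curves carrying negligible geodesic curvature. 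You correctly identify this step as ``the hardest part,'' but as written it is not a proof.

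The paper avoids the flat/curved dichotomy entirely. It argues by contradiction: if $\area(S_n)/g(S_n)\to\infty$ then $\fint_{S_n}|\mathring A|^2\to 0$, and a Calegari--Marques--Neves style counting argument in the universal cover (using the area bound of Theorem~\ref{thm_area_bound} on fundamental-domain pieces, together with Lemma~\ref{thm_handlebody}) produces lifts $\tilde S_n$ with $\int_{\tilde S_n\cap B(0,R)}|\mathring A|^2\to 0$ for every $R$. These lifts then converge to a horosphere, forcing $g(S_n)=1$ for large $n$ and contradicting $g\geq 3$. The leverage comes from the earlier compactness machinery, not from a pointwise decomposition of $S$.
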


\begin{Cor}\label{cor_bryant area bound}
     Let $M$ be a finite-volume hyperbolic 3-manifold.
    There exists a constant $C(M)>0$ depending only on $M$, such that for any closed Bryant surface $S$ immersed in $M$ with genus $g\neq 1$, we obtain $g\geq 3$, and
    \begin{equation*}
        \text{area}(S)\leq C(M)g.
    \end{equation*}
\end{Cor}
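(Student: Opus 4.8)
The plan is to reduce the finite-volume case to the closed case of Theorem~\ref{thm_bryant area bound} by showing that the only feature distinguishing a cusped manifold from a closed one---the cusp ends---cannot be reached by a Bryant surface of genus $g\neq 1$. If $M$ is closed there is nothing to prove, so assume $M$ is noncompact with finitely many cusps $C_1,\dots,C_k$. By the thick--thin decomposition, each cusp is isometric to a quotient of a horoball, and is foliated by horospherical tori $\{T_t\}$, each of which is totally umbilic with constant mean curvature exactly $1$. Write $M_0$ for the compact core obtained by removing fixed open cusp neighborhoods; $M_0$ is compact and depends only on $M$. The two steps are: (i) confine $S$ to $M_0$, and (ii) run the area-bound mechanism of Theorem~\ref{thm_bryant area bound}, which is local in the ambient (curvature $-1$) geometry, on $M_0$.

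For step (i) I would argue by the maximum principle against the horospherical foliation. Suppose $S$ enters a cusp $C_j$, and sweep the tori $T_t$ inward from deep in the cusp until they first touch $S$; at the extremal torus $T_{t^*}$ the surface $S$ is tangent and lies to one side. Since both $S$ and $T_{t^*}$ are CMC with $|H|=1$, once the mean curvature vectors are seen to agree along the correct side, the strong maximum principle together with unique continuation forces $S$ to coincide with $T_{t^*}$ on a neighborhood, hence everywhere, so that $S=T_{t^*}$ is a horospherical torus. As this torus has genus $1$, it is excluded by the hypothesis $g\neq 1$, and therefore $S\subset M_0$. This is exactly where the genus-$1$ exclusion is sharp: the horospherical tori are themselves genuine closed embedded CMC-$1$ surfaces that penetrate arbitrarily deep into the cusps, so they can neither be confined to $M_0$ nor satisfy $g\geq 3$, and removing $g=1$ removes precisely these surfaces.

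With $S\subset M_0$, the ambient geometry in a tubular neighborhood of $S$ is identical to that of a closed hyperbolic $3$-manifold, so the derivation behind Theorem~\ref{thm_bryant area bound}---the genus bound $g\geq 3$ and the estimate $\area(S)\leq Cg$ via Gauss--Bonnet, the holomorphic Bryant data, and the embeddedness/Alexandrov structure---applies verbatim. The resulting constant depends only on geometric quantities of the fixed compact core $M_0$ (its diameter and a lower bound for the injectivity radius on the thick part), hence only on $M$, giving $\area(S)\leq C(M)g$. Alternatively, one may transfer the bound through a geometrically convergent sequence of hyperbolic Dehn fillings $\widehat{M}_n\to M$ and apply Theorem~\ref{thm_bryant area bound} in $\widehat{M}_n$, provided the constants are controlled uniformly along the sequence.

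The main obstacle is the orientation bookkeeping in step (i): at an extreme point of the depth function the surface is locally a dome and its mean curvature vector points outward along the cusp, so one must pin down the direction of the mean curvature vector of a Bryant surface relative to the region it bounds and verify that, at the first-contact torus, this direction matches that of the horosphere so that the maximum principle yields coincidence rather than merely a consistent inequality. Confirming that the horospherical tori are the \emph{only} closed CMC-$1$ surfaces that can be tangent to the foliation from the relevant side---and hence that $g\neq 1$ is exactly the right hypothesis---is the delicate part; the secondary technical point is checking that the constant produced by Theorem~\ref{thm_bryant area bound} genuinely localizes to $M_0$ and does not secretly use compactness of the whole of $M$.
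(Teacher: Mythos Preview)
Your approach diverges from the paper's, and the gap you yourself flag in step~(i) is genuine and, as far as I can see, not repairable along the lines you sketch. At the deepest point $p$ of $S$ in a cusp, the comparison with the horotorus $T_{t^*}$ gives $H_S^{\uparrow}(p)\leq 1$ with respect to the upward normal, so either $H_S^{\uparrow}(p)=1$ and the strong maximum principle yields $S=T_{t^*}$, or $H_S^{\uparrow}(p)=-1$, meaning the mean curvature vector of $S$ points back toward the core. Nothing in your argument excludes the second alternative: locally the principal curvatures need only satisfy $\kappa_1+\kappa_2=-2$ and $\kappa_i\leq 1$, which is compatible with the Gauss equation $K_S=-1+\kappa_1\kappa_2\leq 0$ (equality at $\kappa_1=\kappa_2=-1$). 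Thus the maximum principle does not produce a uniform depth bound, and you cannot confine $S$ to a fixed compact core $M_0$ independent of $S$. This is not an orientation bookkeeping issue---it is a substantive failure of the barrier argument. The surface could, in principle, send a mean-concave ``finger'' arbitrarily deep into the cusp.

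Step~(ii) is also shakier than you suggest. Even granting $S\subset M_0$, the compact core is not a closed hyperbolic $3$-manifold, so Theorem~\ref{thm_bryant area bound} does not apply directly; its proof runs through Lemma~\ref{lemma_total_curvature}, whose fundamental-domain counting uses the full group $\Gamma=\pi_1(M)$ and the finiteness of $\vol(M)$, not merely local curvature $-1$. The Dehn-filling alternative changes the ambient metric, so $S$ ceases to be Bryant in $\widehat M_n$.

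The paper proceeds differently: it does \emph{not} attempt to confine $S$ to a compact core. Instead it observes that the only place closedness of $M$ enters the proof of Theorem~\ref{thm_bryant area bound} is (a)~the $g=1$ step of Lemma~\ref{thm_genus>2}, where Ratner--Shah density of horosphere projections fails precisely because cusps admit closed horotori---and this is exactly neutralized by the hypothesis $g\neq 1$---and (b)~Lemma~\ref{lemma_total_curvature}. For (b) the paper invokes the extension of the Calegari--Marques--Neves counting argument to finite-volume hyperbolic $3$-manifolds carried out in \cite{Jiang-VargasPallete}. In other words, the machinery is re-run in the cusped setting rather than reduced to the closed one.
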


\subsection{Organization}
The paper is organized as follows. Section~\ref{section_preliminary} reviews the background on CMC surfaces and Bryant surfaces. In Section~\ref{section_intrinsic_est}, we prove the intrinsic and extrinsic curvature estimates for CMC surfaces, which are used later in the proofs of the main theorems. Section~\ref{section_area_compactness} contains the proofs of Theorems~\ref{thm_area_bound} and~\ref{thm_compactness}, and Section~\ref{section_bryant} proves Theorem~\ref{thm_bryant area bound} and Corollary~\ref{cor_bryant area bound}.

\subsection*{Acknowledgements}
The author would like to thank Andr\'{e} Neves for helpful suggestions related to this work.

\section{Preliminaries}\label{section_preliminary}
\subsection{Constant mean curvature surfaces and Lawson correspondence}

Let $(M,g)$ be a Riemannian $3$-manifold. An immersed oriented surface $S\subset M$ with induced metric $g|_S$ and unit normal $\nu$ has second fundamental form $A$ and mean curvature $H = \frac{1}{2}\operatorname{tr}(A)$. A surface is called a \emph{constant mean curvature} (CMC) surface if $H$ is constant on $S$. The case $H=0$ corresponds to minimal surfaces.

A key insight due to Lawson \cite{Lawson1970} is that CMC surfaces in one space form can be transformed into CMC or minimal surfaces in another. More precisely, suppose that $S$ is a Riemann surface and $f:S \to M(c)$ is a conformal immersion into the simply connected space form of sectional curvature $c$. Then there is a correspondence (called the \emph{Lawson correspondence}) which takes a CMC $H$-immersion in $M(c)$ to a minimal immersion in a different space form $M(c+H^2)$, while preserving the conformal structure and Hopf differential.  

In hyperbolic space, an $H$-surface $S$ in $\mathbb{H}^3$ corresponds to an $(H-1)$-surface in $\mathbb{R}^3$ by the Lawson correspondence. More precisely, let $(I,A)$ denote the first and second fundamental forms of $S$. Then there exists an $(H-1)$-surface in $\mathbb{R}^3$, denoted by $\Sigma$, whose first and second fundamental forms are $(I,A-I)$. In particular, a CMC surface in $\mathbb{H}^3$ with constant mean curvature equal to one corresponds to a minimal surface in $\mathbb{R}^3$.

Thus, analytic data such as the induced conformal metric and the quadratic Hopf differential 
transform naturally under this correspondence. This provides a powerful bridge between the theories of minimal and constant mean curvature surfaces.

\subsection{Bryant surfaces in hyperbolic space}

Within hyperbolic $3$-space $\mathbb{H}^3$, surfaces of constant mean curvature $H=1$ 
are analogues of minimal surfaces in $\mathbb{R}^3$. Bryant \cite{Bryant} established that such surfaces admit a \emph{Weierstrass-type representation}, now known as the \emph{Bryant representation}. Surfaces arising in this way are called \emph{Bryant surfaces}. 

Concretely, let $S$ be a Riemann surface.  
Given a meromorphic function $g:S \to \mathbb{C} \cup \{\infty\}$ (called the \emph{hyperbolic Gauss map}) and a holomorphic $1$-form $\omega$ on $S$, one can construct a conformal immersion 
$f:S \to \mathbb{H}^3$ with constant mean curvature $H=1$. The induced metric and Hopf differential are expressed in terms of $(g,\omega)$ by
\[
ds^2 = \frac{(1+|g|^2)^2}{4}\,|\omega|^2, 
\qquad Q = \omega \, dg,
\]
where $Q$ is the Hopf differential of the immersion. The pair $(g,\omega)$ thus encodes all local data of the surface, which is analogous to the classical Weierstrass-Enneper representation of minimal surfaces in $\mathbb{R}^3$.

\section{Intrinsic curvature estimates}\label{section_intrinsic_est}
Suppose $M$ is a finite-volume hyperbolic 3-manifold. Our goal of this section is to establish the following intrinsic curvature estimate for CMC surfaces immersed in $M$.

\begin{Prop}\label{prop_curv_est_sep}
If $S$ is a closed $H$-surface immersed in $M$ with $|H|\geq 1$ and injectivity radius $I_S>0$, then there exists a constant $C=C(I_S)$ depending only on $I_S$, such that \begin{equation*}
    |A|_{L^\infty(S)}\leq C.
\end{equation*}

If $S$ is embedded, then it separates $M$. Moreover, there exist positive constants $\epsilon=\epsilon(I_S)$, $k=k(I_S)$, and a one-sided $\epsilon$-neighborhood $N_\epsilon(S)$ of $S$ in the mean convex component of $M\setminus S$, such that \begin{equation*}
    \area(S)\leq k\text{vol}(N_\epsilon(S)).
\end{equation*}
\end{Prop}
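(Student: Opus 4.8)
The plan is to establish the three assertions in turn, with the curvature estimate carrying the analytic weight and the separation and the area–volume inequality following as geometric consequences of it.

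\textbf{Curvature estimate.} I would argue by contradiction and blow-up. If the bound failed there would be $I_0>0$ and a sequence of closed $H_n$-surfaces $S_n$ immersed in $M$ with $|H_n|\ge 1$, injectivity radius $I_{S_n}\ge I_0$, and $\lambda_n:=\max_{S_n}|A_n|\to\infty$. Using Schoen's point-selection trick I would pick points $p_n\in S_n$ at which $|A_n|$ is large and roughly locally maximal at scale $\lambda_n^{-1}$, lift a fixed-size neighborhood to $\mathbb{H}^3$, and rescale the hyperbolic metric by $\lambda_n$. Under this rescaling the ambient curvature tends to $0$, so the ambient spaces converge to $\mathbb{R}^3$; the rescaled mean curvatures stay bounded, since $|A|^2\ge 2H^2$ forces $H_n/\lambda_n\le 1/\sqrt2$, and the injectivity radius rescales to $\lambda_n I_0\to\infty$. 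The point-selection yields uniform curvature bounds on balls of radius $\to\infty$, so after passing to a subsequence the rescaled surfaces converge in $C^\infty$ on compact sets to a complete CMC surface $\Sigma_\infty\subset\mathbb{R}^3$ with $|A_{\Sigma_\infty}|\le 1$, $|A_{\Sigma_\infty}|(0)=1$; moreover the rescaled immersions become injective at scales tending to infinity, so every extrinsic ball meets $\Sigma_\infty$ in a single disk. The final step is a rigidity statement: a complete CMC surface in $\mathbb{R}^3$ that is embedded at all scales must be flat, contradicting $|A_{\Sigma_\infty}|(0)=1$. This rigidity of the blow-up limit is the crux of the argument and is exactly where the injectivity-radius hypothesis is used.

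\textbf{Separation.} Since $|H|\ge 1>0$, the mean curvature vector gives $S$ a global co-orientation, so $S$ is two-sided, and it suffices to show $[S]=0$ in $H_2(M;\mathbb{Z}/2)$. I would lift $S$ to the properly embedded surface $\widetilde S=\pi^{-1}(S)\subset\mathbb{H}^3$ and compare each component with horospheres, which have $H\equiv 1$. Because $H_{\widetilde S}\ge 1$, the one-sided maximum principle prevents $\widetilde S$ from touching a horosphere from its mean-convex side; sweeping by a suitable family of horospheres then traps each component on one side, so that it bounds its mean-convex region in $\mathbb{H}^3$. Projecting back to $M$ forces $S$ to separate. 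This is precisely where $H\ge 1$ is essential, matching the nonexistence of incompressible CMC surfaces in this range.

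\textbf{Area--volume inequality.} Once $S$ separates, let $\nu$ be the unit normal into the mean-convex component, and set $\Phi(x,t)=\exp_x(t\nu(x))$ and $N_\epsilon(S)=\Phi(S\times[0,\epsilon))$. Two ingredients make this a genuine one-sided collar for $\epsilon=\epsilon(I_S)$ small: the curvature bound $|A|\le C(I_S)$ pushes the first focal time away from $0$ (so $\Phi$ is a local diffeomorphism), and on the mean-convex side the maximum principle — two sheets with $H\ge 1$ cannot face and approach one another — gives a definite lower bound on the normal injectivity radius, so $\Phi$ is injective. The area element of the parallel surface $S_t$ is $J(t)=\prod_i(\cosh t-\kappa_i\sinh t)$, with $J(0)=1$ and $|J'(0)|=2|H|\le\sqrt2\,C(I_S)$; hence for $\epsilon$ small one has $J(t)\ge\tfrac12$ on $[0,\epsilon]$. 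Integrating,
\[
\vol(N_\epsilon(S))=\int_0^\epsilon\!\!\int_S J(t)\,dA\,dt\ \ge\ \tfrac{\epsilon}{2}\,\area(S),
\]
which is the asserted bound $\area(S)\le k\,\vol(N_\epsilon(S))$ with $k=2/\epsilon$. The single genuine difficulty is the curvature estimate and, inside it, the flatness of the blow-up limit; the separation and the collar are then routine applications of the maximum principle with horospheres once the curvature bound is in hand.
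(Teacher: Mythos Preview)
The gap is in the curvature estimate, precisely at the step you yourself flag as the crux. Your claimed rigidity fails: the helicoid is a complete simply-connected minimal surface in $\mathbb{R}^3$ with $|A|\le1$, $|A|(0)=1$, and --- since $K\le0$ and Cartan--Hadamard applies --- \emph{infinite intrinsic injectivity radius}. So $\lambda_n I_0\to\infty$ does not deliver your ``every extrinsic ball meets $\Sigma_\infty$ in a single disk'' condition; the helicoid winds infinitely many sheets through any large extrinsic ball around its axis. You have conflated injectivity of $\exp_p:T_pS\to S$ (which is what $I_S$ measures) with injectivity of the immersion $S\to\mathbb{R}^3$ on extrinsic balls; these are unrelated. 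Nor is the case $H_n/\lambda_n\to H_\infty>0$ handled --- the limit could then be an immersed, non-flat CMC surface.

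The paper's route is substantially longer and does not shortcut this point. It establishes intrinsic curvature estimates for embedded $H$-disks in $\mathbb{H}^3$ via the Meeks--Tinaglia program: a weak extrinsic curvature estimate, extrinsic radius estimates, the full extrinsic curvature estimate, one-sided curvature estimates, and the weak chord-arc property, and only then the intrinsic estimate. The helicoid \emph{does} appear as the blow-up limit in that argument, and is ruled out by showing that the pre-limit disks --- which still have strictly positive $H$ --- contain arbitrarily many multi-valued graphs; two adjacent sheets then have mean-curvature vectors pointing in opposite directions, and collapsing them violates the one-sided maximum principle. The key is that $H>0$ is used \emph{before} passing to the limit, exactly the information you discard by letting $H_n/\lambda_n\to0$ first.

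Your separation and area--volume arguments are essentially correct and match the paper's: horosphere comparison and the maximum principle for the former (the paper's Lemma~3.2), and a one-sided collar once the curvature bound is available for the latter (the paper quotes Meeks--Tinaglia's $\epsilon$-neighborhood theorem rather than computing $J(t)$ directly, but your version is fine).
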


In particular, we will focus exclusively on the intrinsic curvature estimate for CMC disks embedded in $\mathbb{H}^3$ (Proposition~\ref{prop_intrinsic}) stated below. Once this is proved, the curvature estimate in Proposition~\ref{prop_curv_est_sep} follows directly as a consequence. 

Additionally, the separation property is derived from Lemma~\ref{lem_separating}. The construction of the one-sided $\epsilon$-neighborhood is outlined in Theorem 3.5 of \cite{Meeks-Tinaglia_epsilon-nbhd}, the constants $\epsilon$ and $k$ depend on the sectional curvature of the ambient manifold $M$, as well as the lower bound of $|H|$ and the upper bound of $|A|$ for the CMC surface $S$. Since the sectional curvature of $M$ and the lower bound of $|H|$ are fixed numbers $-1$ and $1$, respectively, and the first part of the proposition shows that $|A|$ of $|S|$ is bounded by $C(I_S)$, the constants $\epsilon$ and $k$ ultimately depend only on $I_S$.

\begin{Prop}[Intrinsic curvature estimates]\label{prop_intrinsic}
    Given $\delta>0$, there exists a constant $C=C(\delta)$ such that the following holds. For any $|H|\geq 1$ and any embedded $H$-disk $D\subset \mathbb{H}^3$, we have \begin{equation*}
        \sup_{x\in D:\, d_D(x,\partial D)\geq \delta}|A|\leq C.
    \end{equation*}
\end{Prop}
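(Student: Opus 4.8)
The plan is to prove the intrinsic curvature estimate for embedded $H$-disks in $\mathbb{H}^3$ by a standard blow-up (rescaling) argument combined with the Lawson correspondence, reducing the hyperbolic problem to the well-understood theory of minimal and CMC surfaces in $\mathbb{R}^3$. The statement is scale-invariant in spirit once we pass to $\mathbb{R}^3$, so the heart of the matter is a compactness/contradiction argument.

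First I would argue by contradiction. Suppose the estimate fails for some fixed $\delta>0$. Then there exists a sequence of embedded $H_n$-disks $D_n\subset\mathbb{H}^3$ with $|H_n|\geq 1$ and points $x_n\in D_n$ with $d_{D_n}(x_n,\partial D_n)\geq\delta$ such that $|A|(x_n)\to\infty$. Using a point-selection lemma (the standard Schoen-type/Choi-Schoen argument, choosing the point maximizing $|A|$ weighted by distance to the boundary of the intrinsic $\delta$-ball), I would produce new points $y_n$ and radii $r_n\to 0$ with $\lambda_n:=|A|(y_n)\to\infty$, such that $|A|\leq 2\lambda_n$ on the intrinsic ball $B_{D_n}(y_n,r_n\lambda_n^{-1}\cdot \lambda_n)$ — i.e.\ after rescaling the metric by the factor $\lambda_n$, the rescaled disks $\tilde D_n$ have $|\tilde A|(y_n)=1$ and $|\tilde A|\leq 2$ on intrinsic balls of radius tending to infinity. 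Since we rescale $\mathbb{H}^3$ by $\lambda_n\to\infty$, the ambient sectional curvature tends to $0$, so the ambient spaces converge to flat $\mathbb{R}^3$.

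Next I would invoke the Lawson correspondence: each $H_n$-surface in $\mathbb{H}^3$ corresponds to an $(|H_n|-1)$-surface in $\mathbb{R}^3$, preserving the conformal structure and the second fundamental form (up to the shift $A\mapsto A-I$ recorded in the preliminaries). After rescaling, the mean curvatures of the corresponding Euclidean surfaces stay bounded (in fact the rescaled $H_n-1$ tends to a finite limit or to $0$), so standard interior curvature and compactness theory for CMC/minimal surfaces in $\mathbb{R}^3$ applies. With the uniform bound $|\tilde A|\leq 2$ and the boundary points receding to infinity, I would extract a smooth limit: a complete, embedded, nonflat CMC (possibly minimal) surface $\Sigma_\infty\subset\mathbb{R}^3$ with $|A|(y_\infty)=1$ and $|A|\leq 2$ everywhere, arising as a limit of embedded disks. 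The aim is to contradict the existence of such a limit.

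The hard part will be obtaining the contradiction from the limit surface, i.e.\ ruling out a nontrivial complete limit. This requires care about what kind of surface $\Sigma_\infty$ can be. The cleanest route is to exploit that $\Sigma_\infty$ is a limit of \emph{embedded disks} with uniformly bounded curvature: such a surface is properly embedded and simply connected (or has controlled topology), and for CMC surfaces in $\mathbb{R}^3$ one can appeal to the structure theory ruling out complete embedded nonflat examples with these constraints, or to a stability/halfspace-type argument. For the minimal case ($|H_n|\to 1$), the limit is a complete embedded minimal surface of bounded curvature arising as a limit of disks, and the relevant rigidity (e.g.\ that it must be a plane, contradicting $|A|(y_\infty)=1$) follows from the curvature-estimate theory of Schoen and of Colding-Minicozzi, or directly from the fact that a complete embedded minimal disk with bounded curvature is flat. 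The main subtlety I anticipate is handling the two regimes uniformly — the genuinely CMC case and the asymptotically minimal case — and verifying that the rescaling sends the boundary to infinity intrinsically so that no boundary effects survive in the limit; I would treat these by the point-selection normalization above and by invoking the embeddedness to control the geometry near $y_n$, which is exactly where the hypothesis that $D$ is embedded (rather than merely immersed) is essential.
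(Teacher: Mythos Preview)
There is a genuine gap at the contradiction step. Your plan concludes by asserting that ``a complete embedded minimal disk with bounded curvature is flat.'' This is false: the helicoid is a complete, properly embedded, simply-connected minimal surface in $\mathbb{R}^3$ with $|A|$ bounded (and nonzero). After your point-selection and blow-up, the rescaled mean curvature $H_n/\lambda_n$ tends to zero and the ambient curvature tends to zero, so the limit is exactly a nonflat minimal lamination of $\mathbb{R}^3$; by Meeks--Rosenberg the relevant leaf is a helicoid, and your argument stalls here rather than reaching a contradiction. The paper's proof confronts this directly: once the limit is identified as a helicoid, it uses the multi-valued graph structure of the helicoid together with the fact that the \emph{original} disks have strictly positive mean curvature. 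Two sheets of the approximating multi-graph in $D_n$ that converge to each other must carry mean curvature vectors pointing in opposite normal directions, which is impossible for an $H$-surface with $H>0$. That is the missing idea; the Schoen/Colding--Minicozzi curvature estimates you cite do not supply it, and indeed the paper's own remark notes that the entire estimate fails for $H=0$ precisely because of the helicoid.

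A second, smaller issue: invoking the Lawson correspondence to move the problem into $\mathbb{R}^3$ is not as clean as you suggest, because the correspondence is an isometric identification of the intrinsic data $(I,A)\leftrightarrow(I,A-I)$ and does not preserve embeddedness of the immersion. The Euclidean ``cousin'' of an embedded $H$-disk in $\mathbb{H}^3$ need not be embedded, so you cannot immediately feed it to structure theorems for \emph{embedded} CMC surfaces in $\mathbb{R}^3$. The paper instead works directly in the space form $\mathbb{H}^3(-k^2)$, proves an extrinsic curvature estimate and an extrinsic radius estimate (using that $H>1$ via comparison with Delaunay surfaces), then upgrades to an intrinsic estimate through a weak chord-arc property; the Lawson correspondence is only used pointwise to compare stability operators.
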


We will follow the strategy presented in Section 5 of \cite{Tinaglia_review} and outline the key steps. In the following proofs, we assume $H\geq 1$, since the case where $H\leq -1$ can be derived in a similar manner. Additionally, we note that the proof remains valid if $|H|$ is bounded below by a positive constant.

\subsection{Separation property}

First, when the surface is embedded in $M$, we deduce the separation property.
\begin{Lemma}\label{lem_separating}
    Let $S$ be a closed $H$-surface immersed in $M$, where $|H|\geq 1$. In this case, $S$ is not an essential surface. Furthermore, if $S$ is embedded, it divides $M\setminus S$ into two connected components.
\end{Lemma}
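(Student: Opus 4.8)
The plan is to work in the universal cover $\mathbb{H}^3$ and to use horospheres---the model hypersurfaces of constant mean curvature exactly $1$---as barriers against which to compare $S$ through the geometric maximum principle. The hypothesis $|H|\ge 1$ is designed precisely so that $S$ is ``at least as convex'' as a horosphere, and every step below exploits this borderline comparison. I treat the two assertions (non-essentiality and separation) by variants of the same barrier technique, assuming $H\ge 1$ as in the rest of the section.

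For non-essentiality I argue by contradiction: suppose the inclusion induces an injection $\pi_1(S)\hookrightarrow\pi_1(M)$. Lifting to $\mathbb{H}^3$, choose a component $\tilde S_0$ of the preimage invariant under $\pi_1(S)$; since $S$ is compact, $\pi_1(S)$ acts cocompactly on $\tilde S_0$, so the accumulation set of $\tilde S_0$ on $\partial_\infty\mathbb{H}^3=S^2_\infty$ is exactly the limit set $\Lambda$ of $\pi_1(S)$. When $\Lambda\neq S^2_\infty$, pick a point $\xi\in S^2_\infty\setminus\Lambda$ and consider the foliation of $\mathbb{H}^3$ by horospheres centred at $\xi$. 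Because $\xi\notin\Lambda$, the surface $\tilde S_0$ stays outside a sufficiently small horoball $B$ about $\xi$; enlarging $B$ until it first touches $\tilde S_0$ produces an interior tangency at which $\tilde S_0$ lies on the far side of $\partial B$ from $\xi$. Comparing mean curvatures at the contact point, with the normal pointing into $B$, forces $H\le 1$, hence $H=1$, and the strong maximum principle then makes $\tilde S_0$ coincide with $\partial B$. This is impossible: the stabiliser of a horosphere in $\mathrm{Isom}(\mathbb{H}^3)$ is parabolic, hence virtually abelian, and cannot contain a cocompact closed-surface group. The remaining case $\Lambda=S^2_\infty$ corresponds to a (virtual) fibre, where no such $\xi$ exists; this is the one place where I would instead invoke the known non-existence of essential CMC surfaces with $H\ge 1$ recalled in the introduction.

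For separation, note first that the mean curvature vector canonically orients the normal bundle, so $S$ is two-sided. Suppose $S$ does not separate. Then its nontrivial class determines a surjection $\pi_1(M)\to\mathbb{Z}$ and a corresponding infinite cyclic cover $M_\infty\to M$, in which the preimage of $S$ is a disjoint stack of congruent copies $\{S_j\}_{j\in\mathbb{Z}}$ bounding finite-volume regions $R_j$ between $S_j$ and $S_{j+1}$; the deck translation carries $S_j$ to $S_{j+1}$ isometrically and therefore aligns all their mean curvature vectors in one direction. Thus across $R_0$ the boundary piece $S_0$ is mean-convex while $S_1$ is mean-concave. I then sweep $S_0$ across $R_0$ by its equidistant surfaces on the mean-convex side: a Riccati comparison in curvature $-1$ shows these equidistants retain $H\ge 1$, and strictly increase the mean curvature when $H>1$. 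Since $R_0$ has finite volume and lies within bounded distance of $S_0$, the sweepout reaches $S_1$, and the first interior contact yields a tangency violating the maximum principle, the pushed-up surface having strictly larger mean curvature than $S_1$.

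The main obstacle is the borderline value $H=1$, where the barrier arguments degenerate: equidistants of a horosphere are again horospheres, so the sweepout neither forces contact nor produces a strict inequality, and the comparison in the non-essentiality step only yields the rigid equality case. Both difficulties must be resolved through the rigidity of the strong maximum principle---identifying the limiting configuration with an actual horosphere and excluding it on group-theoretic grounds---together with the finite-volume and cusp structure of $M$, which controls the non-compact regions $R_j$ and guarantees the required accumulation. Arranging these rigidity statements so that they cover $H=1$ uniformly, rather than only $H>1$, is the crux of the proof.
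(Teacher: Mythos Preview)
For the first assertion your horosphere-barrier idea coincides with the paper's, but you have introduced an unnecessary case split and then failed to close it. The paper does not analyse the limit set at all: it simply takes the lift $\tilde S$ of an essential surface to be a disk whose accumulation set $\partial_\infty\tilde S$ is a simple closed curve, picks $x\in S^2_\infty\setminus\partial_\infty\tilde S$ on the mean convex side, and slides horospheres centred at $x$ until first tangency, obtaining $H<1$ directly from the comparison. Your residual case $\Lambda=S^2_\infty$ is disposed of by invoking ``the known non-existence of essential CMC surfaces with $H\ge 1$ recalled in the introduction''---but that is precisely the assertion you are in the middle of proving, so the appeal is circular and the gap is genuine.

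For separation your route is genuinely different from the paper's and is where the real difficulties concentrate. The paper's argument is a single homological sentence: since $S$ is not essential it does not represent a non-zero class in $H_2(M;\mathbb Z)$, and therefore separates. No cyclic covers, no sweepouts, no borderline analysis at $H=1$. Your equidistant-surface sweepout, by contrast, has several gaps beyond the $H=1$ degeneration you already flag. Equidistant hypersurfaces from $S_0$ on the mean convex side develop focal singularities in finite time whenever a principal curvature is large (and $H\ge 1$ does not prevent one principal curvature from being arbitrarily large), so your barrier family need not remain smooth long enough to reach $S_1$. The claim that $R_0$ ``lies within bounded distance of $S_0$'' is asserted without proof and can fail when $M$ has cusps, so the sweepout may run off into a cusp rather than force an interior contact with $S_1$. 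And even at a first tangency in the case $H=1$, the Riccati comparison yields only equality, leaving you with a global rigidity statement you have not supplied. All of this is bypassed by the one-line homological route the paper takes.
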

\begin{proof}

    Let $\Tilde{S}$ be a lift of $S$ in $\mathbb{H}^3$. If $S$ were an essential surface in $M$, then $\Tilde{S}$ would be a disk, and its boundary at infinity, $\partial_\infty\Tilde{S}$, would form a simple closed curve in $S^2_\infty$. This scenario can be ruled out by comparing the mean curvature of $\Tilde{S}$ with that of the horospheres. 
    
    To be more specific, suppose $x\in S^{2}_\infty\setminus \partial\Tilde{S}$ is a point that lies in the mean convex side of $\Tilde{S}$, we recall that $\mathbb{H}^3$ is foliated by a family of horospheres \emph{centered} at $x$, that is, all these horospheres are asymptotic to $x$ on $S^{2}_\infty$. We denote this family by $\mathcal{S}_{x}$. 
    We can choose a horosphere $S_1\in\mathcal{S}_x$ such that $S_1$ is tangent to $\Tilde{S}$ at a point, and their mean curvature vectors are pointing in the same direction. This implies that the mean curvature of $\Tilde{S}$, given by $H$, must be strictly less than the mean curvature of the horosphere $S_1$, which is equal to $1$. However, this contradicts the assumption that $H\geq 1$. Therefore, $S$ cannot be an essential surface.

    Furthermore, suppose $S$ is embedded. Since $S$ is not essential, it does not represent a non-zero element in $H_2(M;\mathbb{Z})$. In other words, $S$ separates $M$.
\end{proof}

\subsection{Extrinsic curvature estimates}\label{subsection_extrinsic}
We prove Proposition~\ref{prop_intrinsic} in Sections~\ref{subsection_extrinsic} and~\ref{subsection_intrinsic}.

\begin{Lemma}[Weak extrinsic curvature estimate]\label{lemma_weak extrinsic}
    Let $k>0$ and $H>k$. Given small $\epsilon>0$, there exists a constant $C=C(\epsilon)$ such that the following holds. Let $D$ be an $H$-disk embedded in $\mathbb{H}^3(-k^2)$, and satisfy $0\in D$ and $d_{\mathbb{H}^3(-k^2)}(0,\partial D)\geq\frac{\epsilon}{k}\tanh^{-1}\left(\frac{k}{H}\right)$. Then \begin{equation*}
        |A|(0)\leq CH.
    \end{equation*}
\end{Lemma}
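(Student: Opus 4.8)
The plan is to argue by contradiction via a two-step rescaling (blow-up) that reduces the statement to a rigidity fact about complete embedded minimal surfaces in $\mathbb{R}^3$. The quantity $\frac{1}{k}\tanh^{-1}(k/H)$ appearing in the hypothesis is exactly the radius of the geodesic sphere in $\mathbb{H}^3(-k^2)$ of mean curvature $H$ (solving $k\coth(kr)=H$, which is why the hypothesis $H>k$ is imposed), so it is the intrinsic scale naturally attached to an $H$-disk. Suppose the conclusion fails: then there are $k_n,H_n$ with $H_n>k_n$, a fixed $\epsilon>0$, and embedded $H_n$-disks $D_n\subset\mathbb{H}^3(-k_n^2)$ with $0\in D_n$, $d(0,\partial D_n)\geq \frac{\epsilon}{k_n}\tanh^{-1}(k_n/H_n)$, but $|A_n|(0)>nH_n$.

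First I would normalize the mean curvature to $1$ by scaling the ambient metric by the factor $H_n^2$: this turns $\mathbb{H}^3(-k_n^2)$ into $\mathbb{H}^3(-t_n^2)$ with $t_n=k_n/H_n\in(0,1)$, makes the mean curvature of the rescaled disk $\tilde D_n$ equal to $1$, and scales the second fundamental form so that $|\tilde A_n|(0)=|A_n|(0)/H_n>n\to\infty$. The point of the precise distance hypothesis is that under this scaling the intrinsic radius becomes $d(0,\partial\tilde D_n)=H_n\, d(0,\partial D_n)\geq \epsilon\,\frac{\tanh^{-1}(t_n)}{t_n}\geq\epsilon$, using $\tanh^{-1}(t)\geq t$ on $(0,1)$. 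Thus I obtain embedded $1$-disks in ambient spaces of sectional curvature $-t_n^2\in(-1,0)$ whose intrinsic $\epsilon$-ball about $0$ is compactly contained, yet whose curvature at $0$ diverges.

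Next I would run a Choi--Schoen type point-selection inside $\overline{B_{\tilde D_n}(0,\epsilon/2)}$: maximizing $(\epsilon/2-d(0,\cdot))^2|\tilde A_n|^2$ produces a point $p_n$ with $\lambda_n:=|\tilde A_n|(p_n)$ and radius $\sigma_n:=\epsilon/2-d(0,p_n)$ satisfying $\sigma_n\lambda_n\to\infty$ and $|\tilde A_n|\leq 2\lambda_n$ on $B(p_n,\sigma_n/2)$. Rescaling a second time by $\lambda_n\to\infty$ and recentering at $p_n$ drives the mean curvature ($1/\lambda_n$) and the ambient curvature ($-t_n^2/\lambda_n^2$) to $0$, while keeping $|\hat A_n|=1$ at the center, $|\hat A_n|\leq 2$ on intrinsic balls of radius $\lambda_n\sigma_n/2\to\infty$, and embeddedness. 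Using the uniform curvature bound together with embeddedness and a local area bound from monotonicity, I would extract a smooth, multiplicity-one limit: a complete, embedded, minimal surface $\Sigma\subset\mathbb{R}^3$ with $|A_\Sigma|\leq 2$ and $|A_\Sigma|(0)=1$, arising as a limit of embedded minimal disks.

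The main obstacle is the final rigidity step: one must show that such a $\Sigma$ is forced to be a plane, contradicting $|A_\Sigma|(0)=1$. This is where the embedded structure is essential, and I would invoke the curvature-estimate and structure theory for embedded minimal disks (Colding--Minicozzi), together with the uniqueness of the helicoid (Meeks--Rosenberg), to rule out every nonplanar complete embedded minimal surface that can appear as a blow-up limit of disks with bounded curvature. Establishing the smooth convergence with multiplicity one (so that $\Sigma$ inherits embeddedness and $|A_\Sigma|(0)=1$) and correctly excluding the helicoid are the delicate points; everything else is the scaling bookkeeping carried out above. As noted in the statement, the argument is insensitive to replacing $1$ by any fixed positive lower bound on $|H|$, and the case $H<-k$ follows by reversing orientation.
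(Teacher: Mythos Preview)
Your rescaling and point-picking setup is essentially the same as the paper's (the paper normalizes to $H=1$ first and then blows up by $|A_{D_n}|(0)$, tacitly assuming a point-selection of the kind you make explicit), and both arguments arrive at the same limit: a complete, properly embedded, simply-connected, non-flat minimal surface $\Sigma\subset\mathbb{R}^3$ with bounded curvature. By Meeks--Rosenberg this $\Sigma$ is a helicoid.

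The gap is in your final step. You try to finish by showing that $\Sigma$ must be a plane, invoking Colding--Minicozzi and the uniqueness of the helicoid. But those results do \emph{not} rule out the helicoid; they say the limit is a plane \emph{or} a helicoid. A helicoid genuinely can arise as a blow-up of embedded disks with bounded curvature (it is its own blow-up), so no argument that looks only at the minimal limit $\Sigma$ can produce a contradiction. The positivity of $H$ has been lost in the limit, and with it the mechanism that distinguishes this situation from the minimal case, where the lemma is actually false (rescaled helicoids give counterexamples).

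The paper does not try to exclude the helicoid at the level of $\Sigma$. Instead it accepts that the limit is a helicoid and then goes \emph{back} to the pre-limit $1$-disks $D_n$: near the forming helicoid, $D_n$ contains many multi-valued graphs over a plane, and as $n\to\infty$ two adjacent sheets collapse onto each other with mean curvature vectors pointing in \emph{opposite} directions. For a surface with constant mean curvature $H>0$ this is impossible, since the mean curvature vector determines a consistent side and two nearby sheets with opposite normals violate the one-sided maximum principle. That is where $H>0$ is used, and it is the missing idea in your proposal.
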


\begin{proof}
    Suppose by contradiction that for some small constant $\epsilon>0$, there exists a sequence of $1$-disks $D_n\subset \mathbb{H}^3(-k_n^2)$ that contain the origin and satisfy $d_{\mathbb{H}^3(-k_n^2)}(0,\partial D_n)\geq\frac{\epsilon}{k_n}\tanh^{-1}(k_n)$, where $k_n<1$. However, \begin{equation}\label{equ_assump_extrinsic}
        |A_{D_n}|(0)>n.
    \end{equation}
    
    Let $\Tilde{D}_n:=|A_{D_n}|(0)D_n$ be a rescaling of $D_n$ near the origin. Then it is a $\frac{1}{|A_{D_n}|(0)}$-disk in the simply-connected $3$-space with constant sectional curvature $-\frac{k_n^2}{|A_{D_n}|^2(0)}$. Since $|A_{D_n}|(0)>1$, we have  
    \begin{align}\label{equ_dist_lower}
       d_{\mathbb{H}^3\left(-\frac{k_n^2}{|A_{D_n}|^2(0)}\right)}(0,\partial \Tilde{D}_n)\geq & |A_{D_n}|(0)\cdot \frac{\epsilon}{k_n}\tanh^{-1}(k_n)=\frac12 \frac{\epsilon}{\frac{k_n}{|A_{D_n}|(0)}}\ln\left(\frac{1+k_n}{1-k_n}\right)\\\nonumber
       >& \frac12 \frac{\epsilon}{\frac{k_n}{|A_{D_n}|(0)}}\ln\left(\frac{1+\frac{k_n}{|A_{D_n}|(0)}}{1-\frac{k_n}{|A_{D_n}|(0)}}\right)=\frac{\epsilon}{\frac{k_n}{|A_{D_n}|(0)}}\tanh^{-1}\left(\frac{k_n}{|A_{D_n}|(0)}\right).
    \end{align} 
    Additionally, $\Tilde{D}_n$ has uniformly bounded norm of the second fundamental form in an extrinsic ball. Therefore, the standard compactness theorem says that, after passing to a subsequence, $\Tilde{D}_n$ converges smoothly on compact sets to a minimal lamination $\mathcal{L}$ of $\mathbb{R}^3$. 
    
    Since $|A_{\Tilde{D}_n}|(0)=1$, we obtain $|A_{\Tilde{D}_\infty}|(0)=1$, where $\Tilde{D}_\infty$ is the leaf of $\mathcal{L}$ that contains the origin. Hence, $\Tilde{D}_\infty$ is non-flat. Taking $n\to\infty$ in \eqref{equ_dist_lower}, we get $d_{\mathbb{R}^3}(0,\partial\Tilde{D}_\infty)\geq \epsilon>0$.
    Moreover, since $\Tilde{D}_\infty$ is a complete connected minimal surface in $\mathbb{R}^3$ with bounded second fundamental form, it must be proper (\cite{Meeks-Rosenberg_properness}, Theorem 2.1).
    According to the halfspace theorem, any connected proper non-planar minimal surface in $\mathbb{R}^3$ cannot be contained entirely in a halfspace (\cite{Hoffman-Meeks}, Theorem 1). When combined with Theorem 1.6 of \cite{Meeks-Rosenberg}, it follows that if $\mathcal{L}$ has more than one leaf, then the leaves must be planes. This implies that $\Tilde{D}_\infty$ is the only leaf in $\mathcal{L}$. 
    
    Furthermore, we observe that $\Tilde{D}_n$ converges to $\Tilde{D}_\infty$ with multiplicity at most two. If the multiplicity were at least three, then for sufficiently large $n$, we could identify disjoint components $\Tilde{D_n^1}$, $\Tilde{D_n^2}$ of $\Tilde{D}_n$, where each component is a graph over the limit $\Tilde{D}_\infty$. Additionally, assume that the inner product of the unit normal vectors of $\Tilde{D_n^1}$ and $\Tilde{D_n^2}$ converges to one as $n\rightarrow \infty$. Denote the graphs of $\Tilde{D_n^1}$, $\Tilde{D_n^2}$ by $f_n^1$, $f_n^2$, respectively. They satisfy $f_n^1-f_n^2>0$. Using the elliptic PDE theory and normalizing $f_n^1-f_n^2$ at some point of $\Tilde{D}_\infty$, we could show that its limit gives rise to a positive Jacobi field. This would imply that $\Tilde{D}_\infty$ is stable. However, the only type of stable minimal surfaces in $\mathbb{R}^3$ is the planes, which contradicts the earlier conclusion that $\Tilde{D}_\infty$ is non-flat. 

    As a result, $\Tilde{D}_\infty$ is a properly embedded simply-connected non-flat minimal surface in $\mathbb{R}^3$. By the uniqueness of helicoid (Theorem 0.1 of \cite{Meeks-Rosenberg}), $\Tilde{D}_\infty$ must be a helicoid. For a detailed proof, see Section 2.1 of \cite{Meeks-Tinaglia_curv_est} or \cite{Meeks-Tinaglia_limit_lamination}.

    As argued in Section 2.2 and 2.3 in \cite{Meeks-Tinaglia_curv_est}, on the scale of the norm of the second fundamental form near the origin, the small limiting helicoid can be extended to multi-valued graphs contained in $D_n$, with mean curvature equal to one.
    Here a \emph{$N$-valued graph} is simply-connected $N$-sheeted covering of an annulus. In particular, half of a helicoid is an infinite multi-valued graph. For a detailed definition, see Definition 2.4 in \cite{Meeks-Tinaglia_curv_est}.
    
    More specifically, by passing to a subsequence and assuming that $n$ is sufficiently large, $D_n$ can contain an arbitrarily large number of disjoint multi-valued graphs. Consequently, as $n$ goes to infinity, at least two of these multi-valued graphs in $D_n$ become arbitrarily close to one another, with their mean curvature vectors pointing in opposite directions.
    However, since the mean curvature of $D_n$ is strictly positive, the limit of these graphs collapses, resulting in a configuration where the mean curvature vector points both upwards and downwards simultaneously. This is a contradiction, which disproves the assumption in \eqref{equ_assump_extrinsic}.
\end{proof}

As a corollary, the above lemma establishes a bound on the radii of $H$-disks for $H > 1$. However, note that when $H = 1$, the radii of embedded $1$-disks in $\mathbb{H}^3$ may be unbounded. For instance, the radius of a horosphere is infinite.

\begin{Cor}[Extrinsic radius estimates]\label{cor_extrinsic radius}
   There exists a constant $R>0$ such that the following estimate holds for any $H>1$. Let $D$ be an $H$-disk embedded in $\mathbb{H}^3$, then we have \begin{equation*}
        \sup_{x\in D}d_{\mathbb{H}^3}(x,\partial D)\leq R\tanh^{-1}\left(\frac{1}{H}\right).
    \end{equation*}
\end{Cor}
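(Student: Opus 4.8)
The plan is to argue by contradiction, exploiting the fact that $\tanh^{-1}(1/H)$ is exactly the geodesic radius $\rho$ of the round $H$-sphere in $\mathbb{H}^3$: a geodesic sphere of radius $\rho$ has principal curvatures $\coth\rho$, so mean curvature $H=\coth\rho$ forces $\rho=\tanh^{-1}(1/H)$. Because $H>1$ is strictly larger than the mean curvature $1$ of the horospheres, these compact $H$-spheres exist and serve as comparison barriers, which is what makes the estimate a corollary of Lemma~\ref{lemma_weak extrinsic} in the supercritical hyperbolic setting. Suppose the corollary fails. Then there is a sequence of embedded $H_n$-disks $D_n\subset\mathbb{H}^3$ with $H_n>1$ and interior points $p_n\in D_n$ satisfying $d_{\mathbb{H}^3}(p_n,\partial D_n)>R_n\,r_n$, where $r_n:=\tanh^{-1}(1/H_n)$ and $R_n\to\infty$; after an ambient isometry I put $p_n=0$.

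First I would apply Lemma~\ref{lemma_weak extrinsic} (with $k=1$) at every point $y\in D_n$ with $d_{\mathbb{H}^3}(y,\partial D_n)\geq\epsilon r_n$ for a fixed small $\epsilon$, obtaining a uniform bound $|A|\leq C(\epsilon)H_n$ on the region $\Omega_n=\{\,y\in D_n:\,d_{\mathbb{H}^3}(y,\partial D_n)\geq\epsilon r_n\,\}$, which contains a large extrinsic ball around $0$ since $d(0,\partial D_n)\gg r_n$. This lower bound on the radii of curvature of $D_n$ near $0$, together with $H_n>1$, yields through the one-sided regular neighborhood construction (Theorem~3.5 of \cite{Meeks-Tinaglia_epsilon-nbhd}) an embedded collar on the mean-convex side of $D_n$ near $0$, unobstructed by $\partial D_n$.

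The geometric heart of the argument is the barrier estimate that the mean-convex side of $D_n$ cannot contain a geodesic ball of radius exceeding $r_n$ tangent to $D_n$. Indeed, at such a tangency the inscribed sphere has mean curvature $\coth(\mathrm{radius})$, and comparing mean curvatures at the tangent point (where the difference of mean curvatures equals the trace of the difference of the graph Hessians) forces $\coth(\mathrm{radius})\geq H_n$, i.e.\ $\mathrm{radius}\leq r_n$; otherwise the inscribed sphere would be less mean-curved than $D_n$ while lying to its mean-convex side, violating the interior maximum principle. Shooting the mean-convex normal geodesic from $0$, the maximal tangent ball in the mean-convex side therefore either reaches radius $r_n$, where the equality case forces $D_n$ to be a compact $H_n$-sphere (impossible, as $0$ is far from $\partial D_n$), or is obstructed by a genuine second sheet of $D_n$ within extrinsic distance $2r_n$ whose mean curvature vector points back toward $0$. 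Iterating, $D_n$ must pile up a number of order $R_n$ of oppositely oriented sheets at scale $r_n$ inside $B(0,R_nr_n)$. Rescaling to normalize $r_n$, these become uniformly curvature-bounded multi-valued graphs, and as $R_n\to\infty$ two of them approach each other with mean curvature vectors pointing in opposite directions; exactly as in the endgame of Lemma~\ref{lemma_weak extrinsic} (see Sections~2.2--2.3 of \cite{Meeks-Tinaglia_curv_est}), the limiting configuration has its mean curvature vector simultaneously pointing up and down, the desired contradiction. Tracking constants then yields a universal $R$ with $\sup_{x\in D}d_{\mathbb{H}^3}(x,\partial D)\leq R\,\tanh^{-1}(1/H)$.

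The main obstacle I expect is converting the barrier estimate into the quantitative pile-up of sheets and then invoking the multi-valued-graph collapse: one must ensure that the recurring sheets are genuinely distinct, carry opposite mean curvature vectors, and approach each other in the rescaled picture, which is precisely the delicate constant-mean-curvature phenomenon underlying Lemma~\ref{lemma_weak extrinsic}. A secondary difficulty is the degeneration $H_n\to1^{+}$, where $r_n=\tanh^{-1}(1/H_n)\to\infty$ and the comparison $H_n$-spheres grow without bound; here the strict inequality $H_n>1$ must be retained for each fixed $n$ so that a finite-radius barrier remains available, rather than passing to a horosphere limit in which the supercriticality, and hence the barrier, is lost.
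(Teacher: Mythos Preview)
Your approach diverges substantially from the paper's, and the crucial step---turning the inscribed-sphere barrier into a ``pile-up of oppositely oriented sheets'' and then invoking the multi-valued-graph collapse---does not go through as written.

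The inscribed-ball argument correctly shows that the maximal ball tangent at $0$ on the mean-convex side has radius at most $r_n$. But when that ball touches $D_n$ again at a second point $q$, nothing forces the mean curvature vector at $q$ to point back toward $0$: the comparison at $q$ only says the mean curvature of $D_n$ with respect to the inward normal of the ball is at most $\coth(\text{radius})$, which is consistent with either orientation of $\nu(q)$. So you have no mechanism guaranteeing ``oppositely oriented'' sheets. More seriously, there is no iteration: even granting a second sheet near $0$, you cannot repeat the argument from $q$ to produce a third sheet stacked over the first two, because the mean-convex region need not be a slab---it could be a tube (think Delaunay), in which case the normal geodesics from successive points wander rather than stack. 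Finally, the multi-valued-graph contradiction you borrow from Lemma~\ref{lemma_weak extrinsic} is specific to the helicoid blow-up, where the sheets are genuine graphs over a common axis arising from a point of unbounded curvature; here the curvature is already bounded by Lemma~\ref{lemma_weak extrinsic}, so no helicoid forms and that mechanism is unavailable.

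The paper's route is quite different and worth knowing. It passes to a limit lamination $\mathcal L$ of $H$-surfaces (using the curvature bound from Lemma~\ref{lemma_weak extrinsic}), rules out multiplicity $\geq 3$ by producing a positive Jacobi field and transferring stability through the Lawson correspondence to an $(H-1)$-surface in $\mathbb{R}^3$ (forcing it to be a sphere, hence bounded), shows each leaf is proper, and then argues that a proper $H$-surface with $H>1$ of genus zero must have at least one annular end asymptotic to a Delaunay surface (\cite{Korevaar-Kusner-Meeks-Solomon}). The contradiction comes from an Alexandrov reflection argument bounding the extent of the disks $D_n$ converging to that Delaunay end by $3\tanh^{-1}(1/H)$. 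The structural input---Lawson correspondence, stability classification, Delaunay asymptotics, Alexandrov reflection---is what replaces the sheet-stacking picture you are trying to build by hand.
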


\begin{proof}
    Assuming by contradiction that the result is false, we can find a sequence of $H$-disks $D_n\subset \mathbb{H}^3$ whose extrinsic radii go to infinity. Let $\epsilon>0$ be a sufficiently small number, and let \begin{equation*}
        E_n:=\left\{x\in D_n: d_{\mathbb{H}^3}(x,\partial D_n)\geq\epsilon \tanh^{-1}\left(\frac{1}{H}\right)\right\}.
    \end{equation*}
    From Lemma \ref{lemma_weak extrinsic}, $E_n$ has uniformly bounded norm of the second fundamental form, and thus, the standard compactness argument indicates that $E_n$ converges smoothly on compact sets to a CMC-lamination $\mathcal{L}$ in $\mathbb{H}^3$. This lamination consists of surfaces with constant mean curvature $H$ and genus zero, which are almost embedded, meaning they do not intersect transversely. Let $E_\infty\subset \mathcal{L}$ be an arbitrary leaf.

    If $E_n$ converges to $E_\infty$ with multiplicity greater than two, then as shown in the previous lemma, $E_\infty$ is stable. Because $H>1$, the universal cover of $E_\infty$ is naturally associated with an $(H-1)$-surface in $\mathbb{R}^3$,  denoted by $\bar{E}_\infty$, where $H-1>0$. 
    The stability operator of $E_\infty$ and $\bar{E}_\infty$ are denoted by $L_{E_\infty}$ and $L_{\bar{E}_\infty}$. Let $\nu$ and $\bar{\nu}$ be unit normal vector fields for $E_\infty$ and $\bar{E}_\infty$, respectively. We have
    \begin{align*}
        L_{\bar{E}_\infty}=& \Delta_{\bar{E}_\infty}+|A_{\bar{E}_\infty}|^2+Ric^{\mathbb{R}^3}(\bar{\nu},\bar{\nu})\\
        =&\Delta_{E_\infty}+|A_{E_\infty}|^2-4H+2\\
        <& \Delta_{E_\infty}+|A_{E_\infty}|^2+Ric^{\mathbb{H}^3}(\nu,\nu)=L_{E_\infty}.
    \end{align*}
    Hence, $\bar{E}_\infty$ is also stable. 
    
    Complete stable surfaces immersed in $\mathbb{R}^3$ with nonzero constant mean curvature must be spheres, as shown in \cite{Barbosa-doCarmo}. Therefore, $\bar{E}_\infty$ is contained in a sphere of radius $\frac{1}{H-1}$, and then the universal cover of $E_\infty$ lies in a sphere of radius $\tanh^{-1}\left(\frac{1}{H}\right)$ in $\mathbb{H}^3$. As a consequence, when $n$ is sufficiently large,  $E_n$ is also contained in a sphere of radius $\tanh^{-1}\left(\frac{1}{H}\right)$. This implies that the diameter of $E_n$ is uniformly bounded, which contradicts the assumption that the radius of $D_n$ is unbounded. We conclude that the multiplicity is one or two.

Next, we demonstrate that the limit surface $E_\infty$ is proper. Assume that $E_\infty$ is not proper. In that case, for small $\delta>0$, the complement of $E_\infty$ in its closure would contain infinitely many components within an extrinsic ball of radius $\delta$. If $E_\infty$ were unstable, then there would be a compact, simply-connected subset $K$ in the universal cover of $E_\infty$ that is also unstable. Let $f$ denote the positive eigenfunction of the stability operator on $K$ associated with the smallest eigenvalue $\lambda_1<0$. For $-\delta\leq t\leq \delta$, consider the family of normal graphs $K(t):=\exp(tf\nu)$ over $K$, where $\nu$ is the unit normal vector field to $K$. The surfaces $K(t)$ have constant mean curvature greater than $H$ when $t>0$, and less than $H$ when $t<0$. Because of the maximum principle, the components of the limit leaf, all of which have mean curvature equal to $H$, cannot intersect the family $K(t)$. However, this leads to a contradiction, as such a configuration would violate the assumption of improperness. It means that $E_\infty$ must be stable. However, this is impossible as explained in the previous paragraph. So $E_\infty$ is proper. For more details, we refer readers to Theorem 6.1 in \cite{Meeks-Rosenberg_properness}.

Moreover, if $E_\infty$ has only one end, then it must be a horosphere, which is excluded by assuming $H>1$. So it has at least two ends. From the same proof as in Assertion 4.5 of \cite{Meeks-Tinaglia_Delaunay}, there is at least one annular end. According to \cite{Korevaar-Kusner-Meeks-Solomon}, each annular end of a proper $H$-surface in $\mathbb{H}^3$ with finite genus and bounded norm of the second fundamental form is asymptotically a \emph{Delaunay surface} $F_\infty$, namely a surface obtained by rotating the roulette of the conic. 

We claim that $E_n$ cannot converge to $F_\infty$ with finite multiplicity, leading to a contradiction. 
Assume, for the sake of contradiction, that $E_n$ converges to $F_\infty$ with finite multiplicity. Let $\gamma_n$ and $\gamma_\infty$ be closed geodesics in $E_n$ and $F_\infty$, respectively, such that $\gamma_n\rightarrow \gamma_\infty$. Due to the symmetry of $F_\infty$, it follows that $\gamma_\infty$ is contained in a ball of radius $r$ with $\frac{1}{\tanh r}=H$. Without loss of generality, we assume that the ball is centered at the origin, denoted by $B(0,r)$. Therefore, when $n$ is sufficiently large, $\gamma_n$ is also contained in a ball of the same radius. Let $G_n$ be the disk in $E_n$ that is bounded by $\gamma_n$. Suppose that $x_n\in G_n$ is the point that has the farthest distance to the boundary. Let $\nu_n$ be the normal vector of $G_n$ at $x_n$ pointing towards the origin, and let $\Pi_n^t$ be the plane perpendicular to $\nu_n$ at distance $t$ to the origin. By Alexandrov reflection principle, the connected component of $G_n\subset \Pi_n^{\frac{|x_n|+r}{2}}$ containing $x_n$ is a graph over $\Pi_n^{\frac{|x_n|+r}{2}}$. By maximum principle, the distance from $x_n$ to $\Pi_n^{\frac{|x_n|+r}{2}}$, that is $\frac{|x_n|-r}{2}$, is less than or equal to the radius of an $H$-sphere, that is $r$. Thus, \begin{equation*}
    \frac{|x_n|-\tanh^{-1}\left(\frac{1}{H}\right)}{2}\leq \tanh^{-1}\left(\frac{1}{H}\right),
\end{equation*}
and therefore \begin{equation*}
    |x_n|\leq 3r=3\tanh^{-1}\left(\frac{1}{H}\right).
\end{equation*}
As a consequence, when $n$ is large enough, each $G_n$ is contained in a ball of finite radius, thus it cannot converge to $F_\infty$ at infinity.    
\end{proof}

Next, we deduce a stronger extrinsic curvature estimate. 

\begin{Prop}[Extrinsic curvature estimates]\label{prop_extrinsic curv} 
Given $\delta>0$. There exists a constant $C=C(\delta)$ such that for any $H\geq 1$ and any embedded $H$-disk $D\subset\mathbb{H}^3$,
   we have \begin{equation*}
       \sup_{x\in D: \,d_{\mathbb{H}^3}(x,\partial D)\geq \delta}|A|\leq C.
   \end{equation*}
\end{Prop}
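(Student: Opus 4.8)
The plan is to argue by contradiction through a blow-up, reducing the analysis to the helicoid characterization already carried out in Lemma~\ref{lemma_weak extrinsic}. Suppose the estimate fails for some $\delta>0$. Then there exist embedded $H_n$-disks $D_n\subset\mathbb{H}^3$ with $H_n\geq 1$ and points $x_n\in D_n$ such that $d_{\mathbb{H}^3}(x_n,\partial D_n)\geq\delta$ while $a_n:=|A_{D_n}|(x_n)\to\infty$. The first step is to control the mean curvatures using Corollary~\ref{cor_extrinsic radius}. Since each $D_n$ contains a point at extrinsic distance at least $\delta$ from its boundary, its extrinsic radius is at least $\delta$; for $H_n>1$ the corollary gives $\delta\leq R\tanh^{-1}(1/H_n)$, which forces $H_n\leq\coth(\delta/R)=:H_*$. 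Hence $1\leq H_n\leq H_*$, and after passing to a subsequence $H_n\to H_\infty\in[1,H_*]$. This is precisely the role of the radius estimate: it prevents the mean curvature from escaping to infinity, which would otherwise be compatible with the extrinsic radius collapsing to zero.

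Next, after a standard point-selection argument I may assume that $x_n$ realizes almost-maximal second fundamental form on $D_n\cap\bar B_{\mathbb{H}^3}(x_n,\delta/2)$, so that, after rescaling by $a_n$, the norm of the second fundamental form is bounded (say by $2$) on intrinsic balls whose radii tend to infinity. Translating $x_n$ to the origin by an isometry and rescaling by $a_n$, the surface $\tilde D_n:=a_n D_n$ is an $(H_n/a_n)$-disk in the space form $\mathbb{H}^3(-1/a_n^2)$ of sectional curvature $-1/a_n^2\to 0$. Because $H_n$ is bounded and $a_n\to\infty$, both the ambient curvature and the mean curvature tend to zero, so the standard compactness theorem yields a subsequence converging smoothly on compact sets to a complete minimal lamination of $\mathbb{R}^3$; the leaf $\tilde D_\infty$ through the origin satisfies $|A|(0)=1$, hence is non-flat, complete, and of bounded second fundamental form, and therefore proper by Theorem 2.1 of \cite{Meeks-Rosenberg_properness}.

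From here the argument follows the template of Lemma~\ref{lemma_weak extrinsic} verbatim: the multiplicity of convergence is at most two, since otherwise two graphical sheets over $\tilde D_\infty$ would produce, after normalization, a positive Jacobi field, forcing $\tilde D_\infty$ to be a stable and hence flat minimal surface, contradicting non-flatness. The halfspace theorem (\cite{Hoffman-Meeks}, Theorem 1) together with Theorem 1.6 of \cite{Meeks-Rosenberg} shows that $\tilde D_\infty$ is the unique non-flat leaf, and the uniqueness of the helicoid (Theorem 0.1 of \cite{Meeks-Rosenberg}) identifies it as a helicoid. Pulling the helicoidal sheets back into $D_n$ produces, for large $n$, arbitrarily many disjoint multi-valued graphs whose mean curvature vectors point in opposite directions as the sheets collapse onto one another; since $H_n\geq 1>0$ this configuration is impossible, and the resulting contradiction proves the proposition.

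The main obstacle is the limiting regime $H_\infty=1$. One cannot simply invoke Lemma~\ref{lemma_weak extrinsic}, since for $H$ near $1$ the required distance $\tfrac{\epsilon}{k}\tanh^{-1}(k/H)$ diverges and the constant $C(\epsilon)$ degenerates; equivalently, the scale-invariant ratio governing the weak estimate equals $H$, so no rescaling can push a disk with $H=1$ into the regime $H>k$ where the weak estimate applies. The genuinely delicate point is therefore to verify that the blow-up limit retains enough structure—completeness, embeddedness, properness, and the helicoid characterization—to run the collapsing argument, exactly as in the proof of the weak estimate, while the bound on $H_n$ coming from Corollary~\ref{cor_extrinsic radius} guarantees that this limit is minimal rather than of positive constant mean curvature.
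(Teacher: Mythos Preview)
Your argument is correct and uses the same ingredients as the paper---the radius estimate (Corollary~\ref{cor_extrinsic radius}) to control $H_n$ and the helicoid blow-up analysis from the proof of Lemma~\ref{lemma_weak extrinsic}---but organized somewhat differently. The paper first isolates a Claim for the regime $H\geq h_0>1$, where it rescales by $H_n$ (not by $|A|$) to obtain a $1$-disk in $\mathbb{H}^3(-1/H_n^2)$ and applies Lemma~\ref{lemma_weak extrinsic} as a black box, then combines this with the radius estimate to force a contradiction; only afterwards does it treat the remaining case $H_n\to 1$ by blowing up by $|A|$ exactly as you do. Your version is a mild streamlining: you invoke the radius estimate at the outset to bound $H_n$ uniformly, so that a single blow-up by $a_n=|A|(x_n)$ covers both regimes at once, and you rerun the helicoid/multi-valued-graph contradiction inline rather than quoting the weak estimate. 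Nothing is lost and no genuinely new idea is introduced; the two proofs are equivalent in content.
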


\begin{proof}
First, we use Lemma~\ref{lemma_weak extrinsic} and Corollary~\ref{cor_extrinsic radius} to discuss the case where the mean curvature of $D$ stays away from one and can possibly approach infinity. More specifically, we show the following claim.
\begin{Claim}
    Given $\delta>0$ and $h_0>1$, there exists a constant $C(\delta,h_0)$ such that the following holds. Let $H\geq h_0$ and let $D$ be an $H$-disk embedded in $\mathbb{H}^3$, we have \begin{equation*}
       \sup_{x\in D: \,d_{\mathbb{H}^3}(x,\partial D)\geq \delta}|A|\leq C(\delta,h_0).
   \end{equation*}
\end{Claim}

\begin{proof}[Proof of the claim]
     Arguing by contradiction, the result fails for some $\delta>0$ and $h_0>1$. There exist a sequence of $H_n$-disks $D_n$ in $\mathbb{H}^3$ with $H_n\geq h_0$, and points $x_n\in D_n$ with $d_{\mathbb{H}^3}(x_n,\partial D_n)\geq \delta$, such that \begin{equation}\label{equ_assump_ext_2}
        |A_{D_n}|(x_n)>n.
    \end{equation}
    Rescale the disk by $\Tilde{D}_n:=H_n(x_n)(D_n-x_n)$, then $\Tilde{D}_n$ is a $1$-disk contained in $\mathbb{H}^3\left(-\frac{1}{H_n^2}\right)$. We obtain the following inequalities. \begin{align}\label{equ_1}
         d_{\mathbb{H}^3\left(-\frac{1}{H_n^2}\right)}(0,\partial\Tilde{D}_n) &\geq \delta H_n,\\\nonumber
         |A_{\Tilde{D}_n}|(0)&>\frac{n}{H_n}.
    \end{align}
Let $\epsilon=\frac{\delta}{\tanh^{-1}\left(\frac{1}{h_0}\right)}$. Since the function \begin{equation*}
    x\tanh^{-1}\left(\frac{1}{x}\right)=\frac{x}{2}\ln\left(\frac{x+1}{x-1}\right)
\end{equation*}
decreases on $(1,\infty)$,
the former inequality of \eqref{equ_1} implies that \begin{equation*}
    d_{\mathbb{H}^3\left(-\frac{1}{H_n^2}\right)}(0,\partial\Tilde{D}_n)\geq \delta h_0=\epsilon h_0\tanh^{-1}\left(\frac{1}{h_0}\right)\geq \epsilon H_n\tanh^{-1}\left(\frac{1}{H_n}\right).
\end{equation*}
When combined with Lemma~\ref{lemma_weak extrinsic}, it gives rise to a constant $C'$ that depends only on $\epsilon$, thus depending on $\delta$ and $h_0$, such that 
\begin{equation*}
    |A_{\Tilde{D}_n}|(0)\leq C'.
\end{equation*}
Moreover, Corollary~\ref{cor_extrinsic radius} verifies the existence of $R>0$, so that for the $H_n$-disk $D_n\subset \mathbb{H}^3$, \begin{equation*}
    d_{\mathbb{H}^3}(0,\partial D_n)\leq R \tanh^{-1}\left(\frac{1}{H_n}\right).
\end{equation*}
After rescaling, 
\begin{equation}\label{equ_2}
    d_{\mathbb{H}^3\left(-\frac{1}{H_n^2}\right)}(0,\partial \Tilde{D}_n)\leq RH_n \tanh^{-1}\left(\frac{1}{H_n}\right).
\end{equation}
\eqref{equ_1}, together with \eqref{equ_2}, implies that \begin{equation*}
    \delta\leq R\tanh^{-1}\left(\frac{1}{H_n}\right)\quad \Longrightarrow\quad H_n\leq \frac{1}{\tanh\left(\frac{\delta}{R}\right)}.
\end{equation*}
Consequently, \begin{equation*}
    n\tanh\left(\frac{\delta}{R}\right)\leq \frac{n}{H_n}<|A_{\Tilde{D}_n}|(0)\leq C'.
\end{equation*}
However, since $n\tanh\left(\frac{\delta}{R}\right)$ diverges as $n$ goes to infinity, this contradicts the assumption \eqref{equ_assump_ext_2}. Thus, the proof of the claim is complete.
\end{proof}

To complete the proof of Proposition~\ref{prop_extrinsic curv}, we need to rule out the following scenario. Suppose that $D_n$ is a sequence of $H_n$-disks, where the constants $H_n\geq 1$ converge to one as $n\rightarrow\infty$. Assume there are points $x_n\in D_n$ such that $d_{\mathbb{H}^3}(x_n,\partial D_n)\geq \delta$ and $|A_{D_n}|(x_n)>n$. Consider $\Tilde{D}_n:=|A_{D_n}|(x_n)(D_n-x_n)$. The mean curvature of $\Tilde{D}_n$, given by $\frac{H_n}{|A_{D_n}|(x_n)}$, and the sectional curvature of the ambient space, given by $-\frac{1}{|A_{D_n}|^2(x_n)}$, both converge to zero as $n$ goes to infinity. Consequently, the same argument as in Lemma~\ref{lemma_weak extrinsic} that $\Tilde{D}_n$ converges smoothly on compact subsets to a minimal helicoid in $\mathbb{R}^3$, which leads to a contradiction.
\end{proof}

\subsection{Intrinsic curvature estimates}\label{subsection_intrinsic}
The remainder of the proof for the intrinsic curvature estimate follows the same procedure as the case in $\mathbb{R}^3$. For this reason, we only list the key lemmas and refer readers to Section 5 of \cite{Tinaglia_review} for the detailed arguments.

\begin{Lemma}[Weak chord arc property]\label{lem_chord arc}
    There exists $\epsilon>0$, such that for any $H$-disk $D\subset \mathbb{H}^3$ with $H>0$, and for any intrinsic closed ball $\bar{B}_D(x,R)\subset \mathrm{int} D$, the following statements hold. \begin{itemize}
        \item The extrinsic disk $D(x,\epsilon R):=D\cap B(x,\epsilon R)$ has piecewise smooth boundary $\partial D(x,\epsilon R)\subset \partial B(x,\epsilon R)$.
        \item $D(x,\epsilon R)\subset B_D(x,R)$.
    \end{itemize}
\end{Lemma}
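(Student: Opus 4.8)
The plan is to reduce the first assertion to the second, and then to prove the second by a contradiction-and-compactness argument of the same type already used in Lemma~\ref{lemma_weak extrinsic} and Proposition~\ref{prop_extrinsic curv}. Note first that the containment in the second bullet already yields the substance of the first: if $D(x,\epsilon R)\subset B_D(x,R)\subset\mathrm{int}\,D$, then $D(x,\epsilon R)$ stays away from $\partial D$, so its frontier can consist only of points of $D$ at extrinsic distance exactly $\epsilon R$ from $x$, that is, $\partial D(x,\epsilon R)\subset\partial B(x,\epsilon R)$. Piecewise smoothness is then a transversality statement: by Sard's theorem the geodesic sphere $\partial B(x,\rho)$ meets $D$ transversally for almost every $\rho$, and one is free to replace $\epsilon$ by such a value $\rho/R$. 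Thus I would focus entirely on establishing $D(x,\epsilon R)\subset B_D(x,R)$ for a universal $\epsilon>0$.

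Suppose no such $\epsilon$ exists. Then there are $H_n$-disks $D_n\subset\mathbb{H}^3$, points $x_n\in D_n$, radii $R_n>0$ with $\bar B_{D_n}(x_n,R_n)\subset\mathrm{int}\,D_n$, and $\epsilon_n\downarrow 0$, together with points $y_n\in D_n\cap B(x_n,\epsilon_n R_n)$ for which $d_{D_n}(x_n,y_n)\geq R_n$. After applying an ambient isometry carrying $x_n$ to the origin and dilating distances by the factor $1/R_n$, the rescaled disks $\hat D_n$ contain an embedded intrinsic unit ball $\bar B_{\hat D_n}(0,1)$, lie in the space form of curvature $-R_n^2$, and have mean curvature $H_nR_n$; the bad point $y_n$ now lies in the extrinsic ball $B(0,\epsilon_n)$ but at intrinsic distance at least $1$ from $0$. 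In the regime where the rescaled ambient curvature and mean curvature tend to zero, the extrinsic curvature estimate (Proposition~\ref{prop_extrinsic curv}) furnishes uniform bounds on $|A_{\hat D_n}|$ on interior intrinsic balls, so a subsequence converges smoothly on compact subsets of $B_{\hat D_\infty}(0,1)$ to a minimal lamination of $\mathbb{R}^3$ through the origin.

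The failure of the chord arc bound survives this limit as two sheets of $\hat D_\infty$, namely the limits of the $x_n$-sheet and the $y_n$-sheet, that accumulate over the single ambient point $0$ yet remain at intrinsic distance at least $1$ from one another. This is exactly the folding configuration analysed in Lemma~\ref{lemma_weak extrinsic}: such a limit is either stable, whence flat, contradicting that the two sheets are distinct, or it forces a helicoidal limit whose multi-valued graphs collapse with mean curvature vectors pointing in opposite directions, again a contradiction. I expect the main obstacle to be precisely this compactness step: because here there is no blow-up point normalizing $|A|$ to $1$, I must rule out degeneration of the rescaled surfaces (for instance when $R_n\to\infty$ sends the rescaled curvature to $-\infty$, or when $H_nR_n$ is unbounded), guarantee that the limit is a genuine embedded surface, and verify that the intrinsic separation $d_{\hat D_n}(0,y_n)\geq 1$ is not lost in the limit. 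Controlling these degenerations is where the properness, halfspace, and helicoid-uniqueness results marshalled earlier in this section must be combined, following Section~5 of \cite{Tinaglia_review}.
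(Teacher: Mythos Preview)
The paper does not write out a proof of this lemma; it simply refers to \cite{Tinaglia_review}, \cite{Meeks-Tinaglia_chord_arc}, \cite{Colding-Minicozzi_chord}, and states explicitly that ``the key step of the proof of above lemma is the one-sided extrinsic curvature estimate'' (Lemma~\ref{lem_chord arc} immediately followed by the one-sided estimate). Your reduction of the first bullet to the second is fine and standard. The contradiction/rescaling framework is also the right general shape. The gap is in how you obtain curvature bounds for the compactness step.

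You invoke Proposition~\ref{prop_extrinsic curv} to get uniform bounds on $|A_{\hat D_n}|$ ``on interior intrinsic balls.'' But Proposition~\ref{prop_extrinsic curv} bounds $|A|$ at points whose \emph{extrinsic} distance to $\partial D$ is at least $\delta$. Your hypothesis $\bar B_{\hat D_n}(0,1)\subset\mathrm{int}\,\hat D_n$ controls only the \emph{intrinsic} distance to the boundary, and since $d_{\mathbb{H}^3}\leq d_D$, a lower bound on the latter gives no lower bound on the former. Indeed, the very failure you are assuming---points extrinsically within $\epsilon_n$ of $0$ but intrinsically at distance $\geq 1$---is precisely a situation where the disk may wrap back so that $\partial D_n$ comes extrinsically close to $0$ while remaining intrinsically far. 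So the extrinsic estimate is unavailable, and your limit lamination need not exist. This is not a side issue that properness or the halfspace theorem can repair after the fact; it is the heart of the matter and is exactly the circularity the chord arc property is meant to break.

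What the Colding--Minicozzi/Meeks--Tinaglia argument does instead is use the \emph{one-sided} curvature estimate. When two sheets of an embedded disk are extrinsically close but intrinsically far, embeddedness forces one sheet to lie locally on one side of the other; the one-sided estimate then delivers curvature bounds on that sheet without any assumption on distance to $\partial D$. That is the missing ingredient in your outline, and it is why the paper singles it out as the key step.
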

The weak chord arc property for $H$-disks in $\mathbb{R}^3$ with $H>0$ is discussed by Meeks-Tinaglia in \cite{Meeks-Tinaglia_chord_arc}, which is an extension of the result of Colding-Minicozzi \cite{Colding-Minicozzi_chord} for the case of $H=0$.

The key step of the proof of above lemma is the one-sided extrinsic curvature estimate, which naturally follows from Proposition~\ref{prop_extrinsic curv}.

\begin{Lemma}[One-sided curvature estimates]
    There exists $C,\epsilon>0$, such that the following holds. Let $D$ be an $H$-disk embedded in $\mathbb{H}^3$ with $H>0$. In the Poincar{\'e} ball model, if $D\cap B\left(0,\frac{1}{2}\right)\cap \{z=0\}=\emptyset$, and $\partial D\cap B\left(0,\frac{1}{2}\right)\cap \{z>0\}=\emptyset$, then \begin{equation*}
        \sup_{D\cap B(0,\epsilon)\cap \{z>0\}} |A|\leq C.
    \end{equation*}
\end{Lemma}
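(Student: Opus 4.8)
The plan is to argue by contradiction with a blow-up scheme, using the interior curvature estimate of Proposition~\ref{prop_extrinsic curv} to force the curvature concentration onto the plane $\{z=0\}$ and to produce a complete minimal limit, and then exploiting the one-sided hypothesis to reach a contradiction exactly as in the minimal case treated in Section~5 of \cite{Tinaglia_review}. If the statement fails, then taking $C=n$ and $\epsilon=\tfrac1n$ yields a sequence of embedded $H_n$-disks $D_n\subset\mathbb{H}^3$ satisfying $D_n\cap B(0,\tfrac12)\cap\{z=0\}=\emptyset$ and $\partial D_n\cap B(0,\tfrac12)\cap\{z>0\}=\emptyset$, together with points $p_n\in D_n\cap B(0,\tfrac1n)\cap\{z>0\}$ with $|A_{D_n}|(p_n)>n$; in particular $p_n\to 0$.

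First I would record the geometric consequences of the two hypotheses. Since $D_n\cap B(0,\tfrac12)$ avoids the plane $\{z=0\}$, each of its connected components lies entirely in $\{z>0\}$ or entirely in $\{z<0\}$; let $D_n'$ be the component containing $p_n$, so $D_n'\subset\{z>0\}$. Because $\partial D_n\cap B(0,\tfrac12)$ is contained in $\{z\le 0\}$, no arc of $\partial D_n$ is accessible from $p_n$ within $\{z>0\}\cap B(0,\tfrac12)$; any path in $D_n$ from $p_n$ to $\partial D_n$ must therefore exit $B(0,\tfrac12)$, so the intrinsic distance from $p_n$ to $\partial D_n$ is bounded below by a definite constant. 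Moreover, by Proposition~\ref{prop_extrinsic curv} the curvature is uniformly bounded at points of extrinsic distance at least $\delta$ from $\partial D_n$; since $|A_{D_n}|(p_n)\to\infty$, the points $p_n$ must approach $\partial D_n$ extrinsically, and because the nearby boundary lies in $\{z<0\}$ while $p_n$ lies in $\{z>0\}$ this forces $z_{p_n}\to 0^+$, i.e. the curvature concentrates along $\{z=0\}$.

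Next I would blow up. After a Colding--Minicozzi point-selection that replaces $p_n$ by points of almost-maximal curvature on an intrinsic scale tending to infinity, rescale the Poincar\'e ball by $\lambda_n:=|A_{D_n}|(p_n)\to\infty$ about $p_n$. The rescaled disks $\tilde D_n$ lie in the space form of curvature $-1/\lambda_n^2\to 0$, have rescaled mean curvature $H_n/\lambda_n\to 0$, satisfy $|A_{\tilde D_n}|(0)=1$, and have uniformly bounded second fundamental form on compact sets; the definite lower bound on the intrinsic boundary distance guarantees the limit is boundaryless. As in Lemma~\ref{lemma_weak extrinsic}, a subsequence converges smoothly to a minimal lamination of $\mathbb{R}^3$ whose leaf through the origin has $|A|(0)=1$, hence is non-flat, and by the properness and uniqueness-of-the-helicoid arguments already invoked this leaf is a helicoid. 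The decisive quantity is now the rescaled height $\lambda_n z_{p_n}$ of the forbidden plane. If it stays bounded along a subsequence, the limit helicoid is trapped in a closed half-space, which is impossible since a helicoid is not contained in any half-space; this already contradicts the hypothesis.

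In the remaining case $\lambda_n z_{p_n}\to\infty$, the plane escapes the blow-up window, and I would instead use the extension of multi-valued graphs (Sections 2.2--2.3 of \cite{Meeks-Tinaglia_curv_est}, as already used in the proof of Lemma~\ref{lemma_weak extrinsic}): the limiting helicoid forces $D_n'$ to contain, on the scale $1/\lambda_n$, an $N_n$-valued graph with $N_n\to\infty$, which then extends outward to a definite extrinsic scale inside $B(0,\tfrac12)$. Such a spiralling multi-valued graph based at $p_n\to 0$ with $z_{p_n}\to 0^+$ must sweep through a range of heights of definite size about $z_{p_n}$, in particular reaching heights $z\le 0$ and hence meeting $\{z=0\}$ inside $B(0,\tfrac12)$, contradicting the first hypothesis. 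Combining the two cases rules out the curvature blow-up and proves the estimate. I expect the main obstacle to be this last step: controlling the passage of the spiralling sheets from the blow-up scale $1/\lambda_n$ up to the fixed scale $\tfrac12$ without their being cut off (the boundary being confined to $\{z\le 0\}$ inside the ball) and showing they persist until they must cross $\{z=0\}$. This ``no loss of sheets across scales'' phenomenon is the technical heart of the Colding--Minicozzi / Meeks--Tinaglia one-sided estimate, and I would import it from \cite{Meeks-Tinaglia_curv_est} and \cite{Tinaglia_review} rather than reprove it.
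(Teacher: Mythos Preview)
Your proposal is correct and is essentially the argument the paper has in mind. The paper gives no detailed proof for this lemma: it only remarks that the one-sided estimate ``naturally follows from Proposition~\ref{prop_extrinsic curv}'' and defers the details to Section~5 of \cite{Tinaglia_review}. Your contradiction/blow-up scheme---using Proposition~\ref{prop_extrinsic curv} to force the curvature concentration onto $\{z=0\}$, producing a helicoidal limit after point selection, and then importing the multi-valued graph extension from \cite{Meeks-Tinaglia_curv_est} to contradict the one-sided hypothesis---is exactly the Colding--Minicozzi/Meeks--Tinaglia argument the paper is citing.
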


Finally, the following weak intrinsic curvature estimate follows from the extrinsic curvature estimate (Proposition~\ref{prop_extrinsic curv}) and the weak chord arc property (Lemma~\ref{lem_chord arc}). 

\begin{Lemma}[Weak intrinsic curvature estimates]\label{lem_weak_intrinsic}
    Let $k>0$ and $H>k$. Given $\delta>0$, there exists a constant $C=C(\delta)$ depending only on $\delta$, such that for any $H$-disk $D$ embedded in $\mathbb{H}^3(-k^2)$, the simply-connected space with constant sectional curvature $-k^2$, we have \begin{equation*}
        \sup_{x\in D:\, d_D(x,\partial D)\geq \frac{\delta}{k}\tanh^{-1}\left(\frac{k}{H}\right)}|A|\leq CH.
    \end{equation*}
\end{Lemma}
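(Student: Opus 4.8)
The plan is to treat this as the intrinsic companion of the weak extrinsic estimate (Lemma~\ref{lemma_weak extrinsic}), with the weak chord arc property (Lemma~\ref{lem_chord arc}) serving as the bridge that converts the intrinsic distance-to-boundary hypothesis into an extrinsic one. First I would normalize the ambient curvature by rescaling the metric of $\mathbb{H}^3(-k^2)$ by the factor $k^2$, so that lengths scale by $k$ and the ambient becomes $\mathbb{H}^3(-1)=\mathbb{H}^3$. Under this rescaling the $H$-disk $D$ becomes an $\hat H$-disk $\hat D$ with $\hat H=H/k>1$, the second fundamental form scales as $|\hat A|=|A|/k$, and the intrinsic threshold transforms as $\frac{\delta}{k}\tanh^{-1}(k/H)\mapsto \delta\tanh^{-1}(1/\hat H)$. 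Thus the lemma is equivalent to the following assertion in $\mathbb{H}^3$: for every $\hat H>1$ and every $x\in\hat D$ with $d_{\hat D}(x,\partial\hat D)\geq \delta\tanh^{-1}(1/\hat H)=:\hat R$, one has $|A|(x)\leq C(\delta)\,\hat H$. Working in the fixed space $\mathbb{H}^3$ is convenient because both Lemma~\ref{lem_chord arc} and the extrinsic estimates are stated there with constants independent of $\hat H$.

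Next I would fix such a point $x$ and apply the weak chord arc property to the intrinsic closed ball $\bar B_{\hat D}(x,\hat R)$, which lies in $\operatorname{int}\hat D$ by the distance hypothesis (shrinking $\hat R$ by an arbitrarily small amount if one insists on a strict inclusion of the closed ball). Lemma~\ref{lem_chord arc} produces a universal constant $\epsilon>0$ such that the component $\hat D(x,\epsilon\hat R)=\hat D\cap B(x,\epsilon\hat R)$ containing $x$ satisfies $\hat D(x,\epsilon\hat R)\subset B_{\hat D}(x,\hat R)$ and has boundary $\partial\hat D(x,\epsilon\hat R)\subset\partial B(x,\epsilon\hat R)$. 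In particular $\hat D(x,\epsilon\hat R)$ is itself an embedded $\hat H$-disk in $\mathbb{H}^3$ whose boundary lies on the extrinsic sphere of radius $\epsilon\hat R$ about $x$, so that $d_{\mathbb{H}^3}(x,\partial\hat D(x,\epsilon\hat R))=\epsilon\hat R=\epsilon\delta\tanh^{-1}(1/\hat H)$. This is the crucial point: the extrinsic distance produced by the chord arc property carries exactly the scaling factor $\tanh^{-1}(1/\hat H)$.

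Finally I would feed this into the extrinsic curvature estimates. Applying the weak extrinsic estimate (Lemma~\ref{lemma_weak extrinsic}) with ambient curvature $-1$ (i.e.\ $k=1$), mean curvature $\hat H>1$, and the parameter $\epsilon\delta$ in place of its $\epsilon$, the hypothesis $d_{\mathbb{H}^3}(x,\partial\hat D(x,\epsilon\hat R))\geq \epsilon\delta\,\tanh^{-1}(1/\hat H)$ is met, and one concludes $|A|(x)\leq C(\epsilon\delta)\,\hat H=C(\delta)\,\hat H$ (Proposition~\ref{prop_extrinsic curv} yields the same bound after an auxiliary rescaling). Rescaling back by $k$ turns this into $|A|(x)\leq C(\delta)H$, which is the claim. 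The main obstacle is bookkeeping rather than conceptual: one must verify that the chord arc constant $\epsilon$ is genuinely independent of $\hat H$ and of scale, and that the extrinsic distance delivered by Lemma~\ref{lem_chord arc} matches the precise $\tanh^{-1}$-scaled threshold required by the weak extrinsic estimate. Getting these factors to line up is exactly what makes the linear-in-$H$ bound $CH$ emerge with a constant depending only on $\delta$.
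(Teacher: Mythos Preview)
Your proposal is correct and follows essentially the same route the paper indicates: the paper states that Lemma~\ref{lem_weak_intrinsic} ``follows from the extrinsic curvature estimate (Proposition~\ref{prop_extrinsic curv}) and the weak chord arc property (Lemma~\ref{lem_chord arc}),'' which is precisely your strategy of rescaling to $\mathbb{H}^3$, using Lemma~\ref{lem_chord arc} to turn the intrinsic radius $\delta\tanh^{-1}(1/\hat H)$ into an extrinsic one, and then invoking the extrinsic estimate. Your choice to feed the resulting extrinsic radius into Lemma~\ref{lemma_weak extrinsic} rather than Proposition~\ref{prop_extrinsic curv} is in fact the cleaner option here, since it directly delivers the linear-in-$H$ bound with a constant depending only on $\delta$.
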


Building on the proof of Corollary~\ref{cor_extrinsic radius}, we can derive the intrinsic radius estimate for $H$-disks with $H \geq h_0 > 1$ by applying Lemma~\ref{lem_weak_intrinsic}. Furthermore, by combining these results and revisiting the proof of Proposition~\ref{prop_extrinsic curv}, we establish the intrinsic curvature estimate for all $H\geq 1$ (Proposition~\ref{prop_intrinsic}).

\begin{Rem}
   The intrinsic curvature estimate for $H$-surfaces in $\mathbb{R}^3$ with $H>0$ was discussed in a sequence of papers by Meeks and Tinaglia \cite{Meeks-Tinaglia_Delaunay}\cite{Meeks-Tinaglia_chord_arc}\cite{Meeks-Tinaglia_curv_est}\cite{Meeks-Tinaglia_limit_lamination}. 

   When $H=0$, Colding and Minicozzi verified the weak chord arc property \cite{Colding-Minicozzi_chord} and one-sided curvature estimate \cite{Colding-Minicozzi_1-sided}. However, since the plane and helicoid are simply-connected minimal surfaces properly embedded in $\mathbb{R}^3$, the radius estimate is no longer true. Additionally, by rescaling a helicoid, we can construct a sequence of embedded minimal disks in $\mathbb{R}^3$ with the arbitrarily large norm of the second fundamental forms, which gives rise to a counterexample of Lemma~\ref{lemma_weak extrinsic}. Therefore, 
   the weak intrinsic or extrinsic estimate also fails for the minimal case.
\end{Rem}

\section{Area bound and compactness theorem}\label{section_area_compactness}
In this section, we provide the proof of Theorem~\ref{thm_area_bound} and Theorem~\ref{thm_compactness}. 
\subsection{Proof of Theorem~\ref{thm_area_bound}}
    \subsubsection{Define the singular set of convergence}
    
         Suppose by contradiction that $S_n$ is a sequence of $H_n$-surfaces embedded in $M$ with genus at most $g_0$ and $1\leq H_n\leq H_0$. Additionally, \begin{equation}\label{area>n}
            \area(S_n)>n.
        \end{equation}
        We observe that, after passing to a subsequence, the injectivity radius of $S_n$ goes to zero as $n$ goes to infinity. Because otherwise, the injectivity radius of $S_n$ is at least a positive constant $\delta>0$. 
       From Proposition~\ref{prop_curv_est_sep}, there exist constants $\epsilon$ and $k$ depending on $\delta$, and each $S_n$ has a one-sided $\epsilon$-neighborhood in $M$ such that \begin{equation*}
           \area(S_n)\leq k\, \text{vol}(N_\epsilon(S_n))\leq k\,\text{vol}(M)<\infty,
       \end{equation*}
       violating the assumption \eqref{area>n}. 

       From the discussion above, let $p_{1,n}\in S_n$ be the point that achieves the minimum value of the injectivity radius of $S_n$, then $I_{S_n}(p_{1,n})\rightarrow 0$. After passing to a subsequence, we assume that $p_{1,n}$ converges to $q_1\in M$. Furthermore, if $S_n\setminus\{p_{1,n}\}$ still does not have locally bounded injectivity radius, we choose the point $p_{2,n}$ that attains the minimum value on $S_n\setminus\{p_{1,n}\}$ and assume $p_{2,n}\rightarrow q_2\in M$. Repeating the procedure inductively, we find a countable set $\mathcal{Q}_0=\{q_1,q_2,\cdots\}$ of $M$, and let $\mathcal{Q}$ be the closure of $\mathcal{Q}_0$. Notice that, because of Proposition~\ref{prop_curv_est_sep}, $S_n$ has locally bounded norm of the second fundamental form in $M\setminus\mathcal{Q}$. The set $\mathcal{Q}$ is called the \emph{singular set of convergence}, because we will prove the convergence of $S_n$ away from $\mathcal{Q}$ later.

    \subsubsection{Discuss the classification of the singularities}
     Let $\delta_n$ be a sequence that converges to zero as $n\rightarrow \infty$.
     For any $q\in \mathcal{Q}$, analogous to the argument in Section 5 of \cite{Meeks-Tinaglia}, we can find a sequence $p_n\in S_n$ converging to $q$ such that the injectivity radius $I_{S_n}(p_n)<\frac{1}{n\,\delta_n}$. The continuous function \begin{equation*}
         h_n(x):=\frac{d_M\big(x,\partial B_M(p_n,\delta_n)\big)}{I_{S_n}(x)}
     \end{equation*}
     defined on the compact set $S_n\cap\overline{B_M(p_n,\delta_n)}$ has a maximum at an interior point $p_n'\in S_n\cap B_M(p_n,\delta_n)$. Let \begin{equation}\label{equ_sigma_n}
         \sigma_n:=d_M\big(p_n',\partial B_M(p_n,\delta_n)\big)>0,
     \end{equation}
     we have $\sigma_n\rightarrow 0$ as $n\rightarrow\infty$. 
     Moreover,
     \begin{equation}\label{equ_factor}
         \frac{\sigma_n}{I_{S_n}(p_n')}=h_n(p_n')\geq h_n(p_n)=\frac{\delta_n}{I_{S_n}(p_n)}>n.
     \end{equation}
     For any $x_n\in S_n\cap\overline{B_M(p_n',\frac{\sigma_n}{2})}$, we have \begin{equation*}
         \frac{\frac{\sigma_n}{2}}{I_{S_n}(x_n)}\leq h_n(x_n)\leq h_n(p_n')=\frac{\sigma_n}{I_{S_n}(p_n')}.
     \end{equation*}
     This implies that, for any point $x_n\in S_n\cap\overline{B_M(p_n',\frac{\sigma_n}{2})}$, the injectivity radius is uniformly bounded below, that is, 
     \begin{equation}\label{equ_injrad}
         I_{S_n}(x_n)\geq \frac{I_{S_n}(p_n')}{2}.
     \end{equation}
    Consider $B_{\Tilde{S}_n}(p_n',r)$ as the ball in the exponential coordinates $B_{T_{p_n'}\Tilde{S}_n}(0,r)$ centered at the origin, and let \begin{equation*}
        \lambda_n=\frac{1}{I_{S_n}(p_n')}\quad \text{and}\quad \Tilde{S}_n=\lambda_n\left(S_n\cap\overline{B_M\left(p_n',\frac{\sigma_n}{2}\right)}\right)\subset \overline{B_{T_{p_n'}\Tilde{S}_n}\left(0,\frac{\sigma_n}{2I_{S_n}(p_n')}\right)}.
    \end{equation*}
    $\Tilde{S}_n$ is a CMC surface in $\mathbb{H}^3\left(-\frac{1}{\lambda_n^2}\right)$.
     After rescaling, both the mean curvature $H(\Tilde{S}_n)=\frac{H_n}{\lambda_n}$ of $\Tilde{S}_n$ and its ambient sectional curvature $-\frac{1}{\lambda_n^2}$ tend to zero as $n\to \infty$.
    Notice that the injectivity radius satisfies $I_{\Tilde{S}_n}(0)=1$, and by \eqref{equ_injrad}, for any $x_n\in \Tilde{S}_n$, we have $I_{\Tilde{S}_n}(x_n)\geq \frac{1}{2}$.
    
    However, we observe that the norm squared second fundamental form of $\Tilde{S}_n$ may not be locally bounded. Specifically, because the mean curvature of the rescaled surface $\Tilde{S}_n$ converges to zero, the contradiction argument based on the maximum principle, used in the proof of the weak extrinsic estimate (Lemma~\ref{lemma_weak extrinsic}), fails. Moreover, according to \eqref{equ_factor}, the radius $\lambda_n\frac{\sigma_n}{2}$ of $\Tilde{S}_n$ diverges as $n\rightarrow\infty$, so the radius estimate (Corollary~\ref{cor_extrinsic radius}) also breaks down. As a result, the strategy for establishing the curvature estimate in Proposition~\ref{prop_curv_est_sep} does not work. Consequently, in this scenario, having a positive lower bound on the injectivity radius is not equivalent to having a local upper bound on the second fundamental form. This leads us to consider the following two cases.
    \begin{itemize}
        \item If $\Tilde{S}_n$ has locally bounded norm of the second fundamental form, then it converges to a minimal surface $\Tilde{S}\subset \mathbb{R}^3$ smoothly on compact sets away from $\mathcal{Q}$. 
        Besides, if the multiplicity is at least three, then the same reasoning as in Lemma~\ref{lemma_weak extrinsic} demonstrates that $\Tilde{S}$ is stable. 
         The only stable minimal surfaces in $\mathbb{R}^3$ are the planes,  
         which have infinite injectivity radii. However, since $I_{\Tilde{S}}(0) = 1$, $\Tilde{S}$ does not contain any planes. This implies that its multiplicity must be one or two. Additionally, the genus of $\Tilde{S}$ is at most $g_0$. If $\Tilde{S}$ has genus zero and a single end, then, since it is not a plane, it must be a helicoid. However, this would violate the injectivity radius condition at the origin. Therefore, if $\Tilde{S}$ has genus zero, it must have more than one end, meaning it is either a catenoid with finite topology or a Riemann minimal example with infinite topology (\cite{Meeks-Tinaglia}, Section 5). The latter possibility can be ruled out below.

     If $\Tilde{S}$ were a Riemann minimal example, $\partial\Tilde{S}$ would contain a large number of pairwisely disjoint simple closed curves that are homotopically nontrivial. By applying Lemma 5.4 of \cite{Meeks-Tinaglia}, we find that at least two of these curves bound an annulus in $\Tilde{S}$. Due to the convergence and the dilation, for sufficiently large $n$, there would be corresponding homotopically nontrivial curves in $S_n$, and at least two of them would bound an annulus in $S_n$. However, this contradicts Claim 5.3 of \cite{Meeks-Tinaglia}. (Although Claim 5.3 specifically addresses constant mean curvature surfaces in a flat 3-torus, its proof also applies to our case).

        \item Otherwise, the norm of the second fundamental form of $\Tilde{S}_n$ is not locally bounded.
        
        If $H(S_n)=H_n\rightarrow 1$ as $n\rightarrow\infty$,
        then for sufficiently large $n$, 
        \begin{equation*}
             \int_{\Tilde{S}_n}|\mathring{A}|^2\,dA\leq \int_{S_n}|\mathring{A}|^2\,dA \approx\int_{S_n}\left(|\mathring{A}|^2-2(H(S_n)^2-1)\right)dA,
        \end{equation*}
        where $\mathring{A}=A-HI_2$ is the traceless part of the second fundamental form, and $I_2$ is the identity matrix.
        It follows from the Gauss-Bonnet formula that
        \begin{align}\label{equ_Gauss of Bryant}
             \int_{\Tilde{S}_n}|\mathring{A}|^2\,dA &\leq \int_{S_n}\left(|\mathring{A}|^2-2(H(S_n)^2-1)\right)dA+1\\\nonumber
            &=\int_{S_n}-2K\,dA+1=8\pi(g(S_n)-1)+1\\\nonumber
            &\leq 8\pi(g_0-1)+1.
        \end{align}

        By replacing $A$ with $\mathring{A}$ and applying the generalized Simons' inequality along with the generalized mean value inequality, the Choi-Schoen type small curvature estimate for minimal surfaces in $\mathbb{R}^3$ (see \cite{Choi-Schoen}) can be extended to CMC surfaces in $\mathbb{H}^3$. More specifically, the following result can be directly deduced from Theorem 4.1 in \cite{Zhang}, which generalizes the Choi-Schoen curvature estimate to the traceless second fundamental form of CMC surfaces in 3-manifolds. Alternatively, since a CMC surface in $\mathbb{H}^3$ with constant mean curvature slightly greater than and close to one corresponds to a CMC surface in $\mathbb{R}^3$ with sufficiently small constant mean curvature, we can apply Theorem 1.1 of \cite{Bourna-Tinaglia} or Theorem 2.4 of \cite{Sun}, which provide estimates for $|A|$ for surfaces in $\mathbb{R}^3$.
        
        As a result, suppose $p\in S_n$ and $B_M(p,r)\cap \partial S_n=\emptyset$. There exists a constant $\eta$ not depending on $n$, such that if \begin{equation*}
            \int_{S_n\cap B_M(p,r)}|\mathring{A}|^2\,dA\leq \eta,
        \end{equation*}
        then we have \begin{equation*}
            \max_{0\leq s\leq r}s^2\sup_{S_n\cap B_M(p,r-s)}|\mathring{A}|^2\leq C(p,r).
        \end{equation*}
        Recall that the sequence $\sigma_n$, as defined in \eqref{equ_sigma_n}, approaches zero as $n\rightarrow\infty$. By substituting $\delta_n$ in the expression of $\sigma_n$ with a smaller constant, if needed, we obtain
        \begin{equation}\label{equ_eta}
            \int_{\Tilde{S}_n}|\mathring{A}|^2\,dA=\int_{S_n\cap \overline{B_M(p_n',\frac{\sigma_n}{2})}}|\mathring{A}|^2\,dA \leq \eta
        \end{equation} 
        whenever $n$ is sufficiently large. 
        Then, the norm of the second fundamental form of $\Tilde{S}_n$, that is \begin{equation*}
            |A_{\Tilde{S}_n}|^2=|\mathring{A}_{\Tilde{S}_n}|^2+2H(\Tilde{S}_n)^2\rightarrow |\mathring{A}_{\Tilde{S}_n}|^2\quad \text{as }n\rightarrow \infty
        \end{equation*} is locally bounded, contradicting the assumption. 
        
        As a result, after extracting a subsequence, we can assume that $H(S_n)\geq h_0$ for some $h_0>1$ while $n$ is large enough.
        Let $f_n$ and $g_n$ be the meromorphic functions in the Weierstrass representation of the CMC surfaces $\Tilde{S}_n$ in $\mathbb{H}^3\left(-\frac{1}{\lambda_n^2}\right)$, then the same $f_n$ and $g_n$ determine a surface $\Tilde{\Sigma}_n$ in $\mathbb{R}^3$ with positive constant mean curvature homeomorphic to $\Tilde{S}_n$. Therefore, $\Tilde{\Sigma}_n$ has locally unbounded norm of the second fundamental form, and from Theorem 1.5 of \cite{Meeks-Tinaglia_classification}, the sequence converges smoothly on compact sets to a minimal parking garage structure of $\mathbb{R}^3$ with two oppositely oriented columns, denoted by $\Tilde{S}$. 
        Notice that $\Tilde{S}$ is also the limit of $\Tilde{S}_n$. However, this is not possible for the same reason as in the case of the Riemann minimal example.      
    \end{itemize}
    In summary, $\Tilde{S}_n$ has locally bounded norm of the second fundamental form, and it is of one of the following types. \begin{enumerate}
        \item $\Tilde{S}$ is either a catenoid,
        \item or it is an embedded minimal surface with positive genus at most $g_0$.
    \end{enumerate}

     \subsubsection{Prove the finiteness of $\mathcal{Q}$}\label{subsection_finiteness of singularities}
     
     From the previous discussion, for any singular point of convergence $q\in\mathcal{Q}$, we can find a sequence $\Tilde{S}_n$ by zooming in the singularity so that $\Tilde{S}_n$ converges to a minimal surface in $\mathbb{R}^3$ of type (1) or (2). Hence we say that $q$ is either a catenoid singular point or a non-catenoid singular point of genus $1\leq g\leq g_0$.
     
     We claim that $\mathcal{Q}$ is finite. If $H(S_n)\rightarrow 1$ as $n\rightarrow\infty$, then based on \eqref{equ_Gauss of Bryant} and $\eta$ in \eqref{equ_eta}, there exist at most finitely many $p_n\in S_n$, such that 
     \begin{equation*}
         \int_{S_n\cap B(p_n,r)}|\mathring{A}|^2\,dA>\eta.
     \end{equation*}
     The Choi-Schoen type estimate implies that, because the second fundamental form becomes unbounded near the singular points, each point in $\mathcal{Q}$ must be the limit of a sequence $\{p_n\}_{n\in\mathbb{N}}$. Consequently, the finiteness of the sequence $\{p_n\}_{n\in\mathbb{N}}$ indicates that $\mathcal{Q}$ is also finite.
     
     Consider the general case where $1\leq H(S_n)\leq H_0$.
     Regarding singularities of type (2), the number of non-catenoid singular points is at most $g_0$. Let $\mathcal{Q}_c\subset\mathcal{Q}$ represent the set of catenoid singular points. 
     According to Sections 5-6 of \cite{Meeks-Tinaglia}, if there exists an integer $k\geq 1$, such that \begin{equation}\label{equ_number of singularities}
         \#\mathcal{Q}_c> k(3g_0-2),
     \end{equation} then we can find at least $k$ pairs of points in $\mathcal{Q}_c$, so that each of them, denoted by $\{p_i,q_i\}$, serves as the center of two curves that bound an annulus $A_n(p_i,q_i)$ in $S_n$. It corresponds to an annulus $\Tilde{A}_n(p_i,q_i)$ in a lift $\Tilde{S}_n$ to $\mathbb{H}^3$.
     
     For each pair, let $l_n(p_i,q_i)$ be the straight line passing through the center of $\Tilde{A}_n(p_i,q_i)$, around which the annulus is rotationally symmetric.
     We compare the mean curvature of $\Tilde{A}_n(p_i,q_i)$ with that of the cylinders whose central axis of rotational symmetry is $l_n(p_i,q_i)$. Denote such a cylinder of radius $r$ by $C_n(p_i,q_i,r)$. By the maximum principle, if $\Tilde{A}_n(p_i,q_i)$ is tangent from the outside to $C_n(p_i,q_i,r)$ at the farthest point of $\Tilde{A}_n(p_i,q_i)$ from $l_n(p_i,q_i)$, then \begin{equation*}
         H(C_n)=\frac{1}{2\tanh(r)}<H(S_n)\leq H_0
     \end{equation*}
     implies that 
     \begin{equation*}
         r>\tanh^{-1}\left(\frac{1}{2H(S_n)}\right)\geq \tanh^{-1}\left(\frac{1}{2H_0}\right).
     \end{equation*}
     As a result, the farthest distance of $\Tilde{A}_n(p_i,q_i)$ from its central axis of rotational symmetry is bounded below by a fixed positive constant $\tanh^{-1}\left(\frac{1}{2H_0}\right)$. 
     
     Using the Alexandrov reflection principle, there exists an open set $\Tilde{G}(p_i,q_i)\subset \mathbb{H}^3$ associated with $\Tilde{A}_n(p_i,q_i)$, along with a constant $\Tilde{v}>0$, such that $\Tilde{G}(p_i,q_i)$ lies entirely within the mean convex side of $\Tilde{S}_n$, and the volume of $\Tilde{G}(p_i,q_i)$ is at least $\Tilde{v}$. Notably, $\Tilde{v}$ does not depend on $n$ or $i$, but is instead determined by the uniform upper bound $H_0$ of the mean curvature of $S_n$. Due to the separation property of $S_n$, there exists a corresponding open set $G(p_i,q_i)\subset M$ and $v>0$, such that $G(p_i,q_i)$ is contained in the mean convex side of $S_n$ in $M$, and the volume of $G(p_i,q_i)$ is at least $v$. Moreover, for different points $p_j,q_j\in\mathcal{Q}_c$, $G(p_i,q_i)$ and $G(p_j,q_j)$ are disjoint. Thus, we have the volume estimate:
     \begin{equation}\label{equ_volume_lower}
         \text{vol}(M)\geq \sum_{i=1}^k\text{vol}(G(p_i,q_i))\geq kv.
     \end{equation}
     This implies that $k$ is finite. Using \eqref{equ_number of singularities}, we deduce \begin{equation*}
         \#\mathcal{Q}\leq\#\mathcal{Q}_c+g_0\leq (k+1)(3g_0-2)+g_0\leq \left(\frac{\text{vol}(M)}{v}+1\right)(3g_0-2)+g_0.
     \end{equation*}
     $\#\mathcal{Q}$ is therefore finite.

    \subsubsection{Deduce the convergence}
    
     From Proposition~\ref{prop_curv_est_sep}, the second fundamental forms of $S_n$ are locally bounded on compact sets of $M\setminus\mathcal{Q}$. Furthermore, combined with the local area estimate in Theorem 3.5 of \cite{Meeks-Tinaglia_epsilon-nbhd}, this implies the existence of a constant $C_0>0$, such that $\area\left(S_n\cap B_M\left(p,\frac{\epsilon}{2}\right)\right)<C_0$ for all $n\in\mathbb{N}$ and all $p\in M\setminus\mathcal{Q}$, where $\epsilon$ satisfies the condition that $B_M(p,\epsilon)$ does not intersect $\mathcal{Q}$. As a result, a standard compactness theorem applies, and, up to a subsequence, $S_n$ converges smoothly to a surface or lamination, denoted by $S$, on compact sets of $M\setminus\mathcal{Q}$. According to the proof of Corollary~\ref{cor_extrinsic radius}, $S$ is properly immersed in $M$. Moreover, $S$ has constant mean curvature $H$, where $1\leq H\leq H_0$. 

     We now prove that the convergence has multiplicity one. As established earlier, $S_n$ separates $M$ into two components, one of which is strictly mean convex, we denote it by $B_{S_n}$. For any $p\in S\setminus\mathcal{Q}$, $S\cap B_M(p,\epsilon)$ is disjoint from $\mathcal{Q}$. A connected component of $S\cap B_M(p,\epsilon)$, denoted by $S'(p)$, can be expressed as a graph over the tangent plane $T_pS$ of $S$ at $p$. $S'(p)$ is obtained as the limit of a sequence of graphs $S_n'(p)\subset S_n$, each defined over $T_pS$. If the convergence had multiplicity greater than one, ensuring that $S_n$ separates $M$ and bounds a strictly mean convex domain would create a contradiction. Specifically, the mean curvatures of two adjacent components of $S_n'(p)$ would have to point in opposite directions, which would make it impossible to define a consistent direction for the mean curvature vector on $S'(p)$. Thus, we conclude that $S_n\setminus\mathcal{Q}$ converges to $S$ with multiplicity one. Additionally, $B_{S_n}\setminus\mathcal{Q}$ converges to a strictly mean convex domain $B_S$ with $\partial B_S=S$. In other words, $S$ is strongly Alexandrov embedded in $M\setminus\mathcal{Q}$.

     \subsubsection{Find the contradiction}\label{subsection_contradiction}
     
     Since $S_n\setminus\mathcal{Q}$ converges to $S$ with multiplicity one, for any $\epsilon>0$, we can find $C(\epsilon)>0$ such that \begin{equation*}
         \area(S_n\setminus (\cup _{q\in\mathcal{Q}}B_M(q,\epsilon)))\rightarrow \area(S\setminus (\cup _{q\in\mathcal{Q}}B_M(q,\epsilon))) \leq C(\epsilon).
     \end{equation*}
     Thus when $n$ is sufficiently large, we have
     \begin{equation}\label{equ_non-singular}
         \area(S_n\setminus (\cup _{q\in\mathcal{Q}}B_M(q,\epsilon)))\leq C(\epsilon)+1.
     \end{equation}
     Now consider the neighborhood of singular points. Note that the points in $\mathcal{Q}$ are isolated due to finiteness. So we can fix an sufficiently small $\epsilon>0$, such that for distinct $p,q\in\mathcal{Q}$, $B_M(p, 2\epsilon)$ and $B_M(q,2\epsilon)$ are disjoint, and \begin{equation}\label{equ_e}
         4e^{-\epsilon H_0}\geq 2.
     \end{equation} According to the monotonicity formula (for example, see 40.2 of \cite{Simon}), for any $q\in\mathcal{Q}$, \begin{equation}\label{equ_monotonicity}
         e^{2\epsilon H_n}\frac{\area(S_n\cap B_M(q,2\epsilon))}{\omega_2\,4\epsilon^2}\geq e^{\epsilon H_n}\frac{\area(S_n\cap B_M(q,\epsilon))}{\omega_2\,\epsilon^2},
     \end{equation}
     where $\omega_2=4\pi \sinh^2(1)$ is the area of the unit disk in $\mathbb{H}^3$. When combined with \eqref{equ_e}, it implies that \begin{equation*}
         \area(S_n\cap B_M(q,2\epsilon))\geq 4e^{-\epsilon H_0}\area(S_n\cap B_M(q,\epsilon))\geq 2\,\area(S_n\cap B_M(q,\epsilon)).
     \end{equation*}
     Thus, when $n$ is large enough, \begin{align}\label{equ_singular}
         \area(S_n\cap B_M(q,\epsilon))&\leq \area(S_n\cap B_M(q,2\epsilon)) -\area(S_n\cap B_M(q,\epsilon))\\\nonumber
         &\leq \area(S_n\setminus (\cup_{q\in\mathcal{Q}} B_M(q,\epsilon)))\leq C(\epsilon)+1.
     \end{align}
     Combining \eqref{equ_non-singular} and \eqref{equ_singular}, we obtain that \begin{equation*}
         \area(S_n)\leq (\#\mathcal{Q}+1)(C(\epsilon)+1),
     \end{equation*}
     it is uniformly bounded, thus violating the assumption \eqref{area>n}. This finishes the proof of the theorem.

\subsection{Proof of Theorem~\ref{thm_compactness}}
 In accordance with the area bound in Theorem~\ref{thm_area_bound}, $S_n$ has uniformly bound total curvature, \begin{align*}
        \int_{S_n}|A|^2\,dA &= 8\pi(g(S_n)-1)+2\int_{S_n}(-1+2H^2)\,dA\\
        &\leq 8\pi(g_0-1)+2(2H_0^2-1)C(M,H_0,g_0).
    \end{align*}
    Hence, the total curvature of the limit $S$ is bounded by the same number. For any $\eta>0$, by choosing $\epsilon>0$ small enough, we have that for any $q\in \mathcal{Q}$, \begin{equation*}
        \int_{S\cap B(q,\epsilon)}|A|^2\,dA<\eta.
    \end{equation*}
    It follows from Theorem 4.3 of \cite{Sun} that $S$ extends smoothly across $\mathcal{Q}$, and $\mathcal{Q}$ is contained in the set of self-intersections of $S$. Moreover, in a small neighborhood of each non-embedded point $q\in\mathcal{Q}$, $S$ decomposes into two smoothly embedded disks touching at $q$, thus $S$ is almost embedded in $M$. 

    Finally, using the monotonicity formula \eqref{equ_monotonicity} and following the calculations in Section 9 of \cite{Meeks-Tinaglia}, we conclude that all points in $\mathcal{Q}$ are catenoid singular points. 

\begin{Rem}
If $S$ is a closed $H$-surface immersed in $M$ with $0\leq |H|\leq H_0< 1$ and genus at most $g_0$, then it follows immediately from the Gauss-Bonnet formula that \begin{equation*}
    \area(S)\leq \frac{1}{1-H^2}\left(4\pi(g_0-1)-\int_S\frac{|\mathring{A}|^2}{2}\,dA\right)\leq \frac{4\pi(g_0-1)}{1-H_0^2}.
\end{equation*}
The area of $S$ is also uniformly bounded by a constant depending only on $H_0$ and $g_0$.

However, even if $S$ is embedded, $S$ may not separate $M$ (see details in Lemma~\ref{lem_separating}). This prevents us from deriving the same compactness result as stated in Theorem~\ref{thm_compactness}. In particular, the convergence of a sequence of such surfaces may not occur with multiplicity one.
\end{Rem}

\section{Area bound for Bryant surfaces}\label{section_bryant}
In this section, we prove Theorem~\ref{thm_bryant area bound} and Corollary~\ref{cor_bryant area bound}. We start by introducing the following lemmas.

\begin{Lemma}\label{thm_genus>2}
Suppose $M$ is a closed hyperbolic 3-manifold.
    Let $S$ be a closed Bryant surface immersed in $M$, then the genus of $S$ is at least three.
\end{Lemma}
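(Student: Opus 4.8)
The plan is to read the genus off directly from the holomorphic data of the Bryant representation, after first disposing of the degenerate case. The Gauss equation in $\mathbb{H}^3$ gives, for a CMC-$1$ surface, $K = -1 + \det A = -\tfrac12|\mathring A|^2 \le 0$, where $\mathring A = A - I$ is the traceless part of the second fundamental form; equality holds exactly at the umbilic points, which are the zeros of the Hopf differential $Q = \omega\,dg$. If $S$ were totally umbilic, then $Q \equiv 0$, the Gauss map $g$ would be constant, and $S$ would be (a piece of) a horosphere. Since a horosphere is preserved only by parabolic isometries, and the cocompact group $\pi_1(M)$ of a closed hyperbolic manifold contains no parabolics, such a surface cannot close up to a compact surface in $M$. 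Hence $g$ is non-constant and $Q\not\equiv 0$. As a consistency check, Gauss--Bonnet gives $\int_S K\,dA = 4\pi(1-\mathrm{genus})$, so $K\le 0$ already forces the genus to be at least one, and strictness away from the isolated umbilics gives genus at least two; the point is to push this to three.

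The heart of the argument, modeled on the classical fact that a non-flat closed minimal surface in $\mathbb{T}^3$ has genus at least three, is to produce three linearly independent holomorphic abelian differentials on $S$. I claim that $\omega$, $g\omega$, and $g^2\omega$ are such differentials. Their linear independence is immediate once $g$ is non-constant: a nontrivial relation $a+bg+cg^2\equiv 0$ would confine the meromorphic function $g$ to finitely many values and hence make it constant. Granting that the three forms are holomorphic on the compact surface $S$, we conclude $\dim_{\mathbb C} H^0(S,\Omega^1)\ge 3$, so the genus of $S$ is at least three, as asserted.

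The step I expect to require the most care is verifying that $g\omega$ and $g^2\omega$ are genuinely holomorphic, i.e.\ that the poles of the Gauss map are cancelled. This is a local computation at a pole of $g$: if $g$ has a pole of order $k$ in a conformal coordinate $z$, then in order for the induced metric
\[
ds^2 = \tfrac14\,(1+|g|^2)^2\,|\omega|^2
\]
to extend as a smooth nondegenerate Riemannian metric across the pole, $\omega$ must vanish there to order exactly $2k$. Consequently $\omega$, $g\omega$, $g^2\omega$ vanish to orders $2k$, $k$, $0$ respectively and extend holomorphically across every pole of $g$; being holomorphic elsewhere, they are global abelian differentials. The remaining subtlety, which is the genuine obstacle rather than the curvature bookkeeping, is to ensure these forms are single-valued on $S$ rather than merely on a lift: the hyperbolic Gauss map is only equivariant under the holonomy $\rho$, so one must work within the framework of the preliminaries, where $g$ and $\omega$ are given as data on $S$ itself (equivalently, check that the $\mathbb{C}$-span $\langle \omega, g\omega, g^2\omega\rangle$, on which $\pi_1(S)$ acts through $\mathrm{Sym}^2\rho$, descends and still supplies three independent untwisted differentials), so that the dimension count $\dim_{\mathbb C} H^0(S,\Omega^1)\ge 3$ is valid on the compact quotient.
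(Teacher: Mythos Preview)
Your approach—producing three independent abelian differentials from the Weierstrass data—is precisely the classical argument for closed minimal surfaces in flat $3$-tori, and your instinct to flag the monodromy as the real obstruction is correct. Unfortunately, that obstruction is fatal to the argument as written, not a detail to be ``checked.'' For a closed Bryant surface $S\subset M=\mathbb{H}^3/\Gamma$, the holomorphic null lift $F:\tilde S\to SL_2(\mathbb{C})$ transforms as $F(\gamma z)=\rho(\gamma)\,F(z)\,\sigma(\gamma)$, where $\rho$ is the $PSL_2(\mathbb{C})$-holonomy and $\sigma:\pi_1(S)\to SU(2)$. Hence $F^{-1}dF$—whose entries are exactly your $\omega,\,g\omega,\,g^2\omega$—is conjugated by $\sigma(\gamma)$, i.e.\ it transforms through the irreducible representation $\mathrm{Sym}^2\sigma\cong\mathrm{Ad}\,\sigma$. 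The span is indeed invariant, but what descends is a holomorphic section of $K_S\otimes E$, with $E$ the flat rank-$3$ bundle associated to $\mathrm{Ad}\,\sigma$. Unless $\sigma$ takes values in the center $\{\pm I\}$, this bundle is nontrivial and irreducible, so you obtain \emph{no} untwisted abelian differentials from this construction, let alone three; the inequality $\dim H^0(S,\Omega^1)\ge 3$ simply does not follow. (Were $\sigma$ central, the Lawson cousin would have purely translational periods and would close up as a minimal surface in a flat torus—an extremely special situation, not a consequence of the hypotheses.) The preliminaries you invoke describe the \emph{local} Bryant representation; they do not assert that $(g,\omega)$ is globally single-valued on a closed $S$ sitting in a quotient manifold.

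The paper argues differently. After disposing of genus $\le 1$ much as you do (Gauss--Bonnet, plus the impossibility of a horosphere closing up in a closed $M$, via Ratner--Shah), it rules out genus $2$ by a degree argument: since $\int_S(-K)\,dA=4\pi(g-1)$, the Gauss map to $\mathbb{S}^2$ has degree $g-1$, and a degree-$1$ holomorphic map from a closed Riemann surface to $\mathbb{S}^2$ forces $S\cong\mathbb{S}^2$. This is a genuinely different mechanism—a degree obstruction rather than a dimension count on $H^0(S,\Omega^1)$—though you will notice that making it rigorous likewise requires confronting the equivariance of the Gauss map.
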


\begin{proof}

    The Gaussian curvature of $S$ satisfies \begin{equation}\label{equ_gaussian}
        K=-1+H^2-\frac{|\mathring{A}|^2}{2}=-\frac{|\mathring{A}|^2}{2}\leq 0,
    \end{equation} 
    and from the Gauss-Bonnet formula, the genus of $S$, denoted by $g$, is at least one. If $g=1$, then $S$ is a torus and the total Gaussian curvature over $S$ is zero, thus it vanishes at each point. Therefore, at each point of $S$, $\mathring{A}$ vanishes. The lift of $S$ to $M$ must be a horosphere, but this is impossible since the projection of any horosphere to a closed hyperbolic manifold is dense (see Ratner \cite{Ratner} or Shah \cite{Shah}).
    Next, assume that $g=2$. Bryant \cite{Bryant} proved that the hyperbolic Gauss map $\Tilde{G}: \Tilde{S}\rightarrow \mathbb{S}^2$ for the lifting Bryant surface $\Tilde{S}\subset\mathbb{H}^3$ is holomorphic, so the hyperbolic Gauss map $G:S\rightarrow \mathbb{S}^2$ is also holomorphic. We can consider $S$ as a conformal branch cover of $\mathbb{S}^2$ with degree $g-1=1$. However, the only one-sheeted cover of $\mathbb{S}^2$ is $\mathbb{S}^2$. Hence the $g$ is at least three. 
\end{proof}

\begin{Lemma}\label{thm_handlebody}
Suppose that $M$ is a finite-volume hyperbolic 3-manifold.
    Let $S$ be a closed Bryant surface embedded in $M$, and let $B(S)$ be the mean convex component of $M\setminus S$. Then $B(S)$ is an open handlebody, and the induced map $\pi_1(S)\rightarrow \pi_1(B(S))$ is surjective.
\end{Lemma}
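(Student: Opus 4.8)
The plan is to push the mean convex boundary $S=\partial B(S)$ inward by the equidistant (parallel) flow and show that this flow stays strictly mean convex, so that $\overline{B(S)}$ is swept out and collapses onto a $1$-complex. By Lemma~\ref{lem_separating}, $S$ separates $M$; orient $S$ so that its mean curvature vector $\vec H$ points into $B(S)$, and recall $H=1$. Let $S_t$ denote the surface obtained by flowing $S$ a distance $t$ along the unit normal $\nu=\vec H/H$ into $B(S)$. In $\mathbb{H}^3$ (sectional curvature $-1$) the principal curvatures of the parallel surfaces obey the Riccati equation $\dot\kappa_i=\kappa_i^2-1$, so the mean curvature $H(S_t)=\tfrac12(\kappa_1+\kappa_2)$ satisfies
\[
\frac{dH}{dt}=\tfrac12(\kappa_1^2+\kappa_2^2)-1\ \ge\ H^2-1,
\]
using $\kappa_1^2+\kappa_2^2\ge\tfrac12(\kappa_1+\kappa_2)^2=2H^2$. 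Since $H(S_0)=1$, this forces $H(S_t)\ge 1>0$ wherever $S_t$ is smooth. Moreover, as $S$ has genus $\ge 3$ by Lemma~\ref{thm_genus>2} it is not totally umbilic, so $\kappa_1\ne\kappa_2$ on an open set; there the inequality is strict, $H$ increases past $1$, and the Riccati equation produces focal points in finite time. Thus the inward front is strictly mean convex and must collapse.

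I would next rule out the two obstructions to a clean collapse. First, $B(S)$ contains no closed minimal surface: given a minimal $\Sigma\subset B(S)$, let $t_0=\min_{\Sigma}d(\cdot,S)$ and apply the maximum principle at a first contact point of $S_{t_0}$ with $\Sigma$. There $\Sigma$ lies on the side toward which $\vec H_{S_{t_0}}$ points, so the comparison gives $0=H(\Sigma)\ge H(S_{t_0})\ge 1$, a contradiction (the same argument with $t_0=0$ excludes $\Sigma$ touching $S$, and it applies equally to one-sided $\Sigma$). Second, for finite-volume $M$ I must show $\overline{B(S)}$ is compact, i.e.\ that no cusp lies in $B(S)$. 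Here the threshold $H=1$ is again decisive: each cusp is foliated by horospherical tori of mean curvature exactly $1$. If a cusp end lay in $B(S)$, sliding these tori outward to a first tangency with $S$ produces two surfaces with $H=1$ and matching mean curvature vectors, one locally on one side of the other, so the strong maximum principle forces $S$ to coincide with a horospherical torus, contradicting genus $\ge 3$. Hence $\overline{B(S)}$ is compact.

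With these exclusions, $\overline{B(S)}$ is a compact, mean convex region of the hyperbolic (hence irreducible) manifold $M$ containing no closed minimal surface. I would then conclude via the standard dichotomy for such regions: either invoke the mean convex sweepout/arrival-time Morse theory to see that the inward front collapses onto a $1$-dimensional spine (a graph), exhibiting $\overline{B(S)}$ as a regular neighborhood of that graph, i.e.\ a handlebody; or, topologically, use that any closed incompressible surface in $\overline{B(S)}$ could be area-minimized (Freedman--Hass--Scott/Meeks--Simon--Yau) into a closed minimal surface kept in the interior by the mean convex barrier, which the previous paragraph forbids, so that $\overline{B(S)}$ has no closed incompressible surface and is therefore a handlebody by the characteristic compression body classification. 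The surjectivity of $\pi_1(S)\to\pi_1(B(S))$ is then immediate: the boundary of a handlebody always surjects onto its (free) fundamental group, since $B(S)$ deformation retracts onto the spine and the boundary longitudes realize its generators.

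The main obstacle is the passage from \emph{strict mean convexity with no interior minimal surface} to the \emph{handlebody} conclusion, namely controlling the focal/cut locus of the collapsing front and certifying that it is at most $1$-dimensional. The geometric engine (the Riccati inequality keeping $H\ge 1$, and the maximum-principle exclusions) is robust, and the key point is that a $2$-dimensional stratum of the focal set would be a closed minimal surface, already excluded; making this precise requires the regularity theory of mean convex flows (or an appeal to the compression-body dichotomy). A secondary technical point deserving care is the bookkeeping of normal orientations in the horospherical comparison used to exclude cusps, so that the strong maximum principle applies with matching mean curvature vectors.
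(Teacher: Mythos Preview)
Your approach is correct in outline but \emph{very} different from the paper's. The paper gives a three-line argument: lift $B(S)$ to $\mathbb{H}^3$; if $\pi_1(S)\to\pi_1(B(S))$ were not surjective, a component of the preimage would be a mean convex region bounded by at least two disjoint Bryant surfaces, and this is forbidden by the Rodr\'iguez--Rosenberg halfspace theorem for $H=1$ surfaces in $\mathbb{H}^3$. Surjectivity (together with irreducibility and $\partial B(S)=S$) then yields the handlebody conclusion. So the paper trades all of your geometric analysis for a single black-box halfspace theorem in the universal cover.

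Your route---Riccati comparison for equidistants, maximum-principle exclusion of closed minimal surfaces, horotorus comparison to rule out cusps, then Meeks--Simon--Yau plus compression-body theory---is self-contained and avoids citing the halfspace theorem, at the cost of considerably more machinery. Your option~(b) is the clean way to close the argument you flag as the ``main obstacle'': once $\overline{B(S)}$ is a compact, irreducible, mean convex region with no interior closed minimal surface, any closed incompressible surface would area-minimize to a minimal one in the interior, so none exist, and an irreducible compact $3$-manifold with connected boundary and no closed incompressible surface is a handlebody. You do not actually need the focal-locus analysis of option~(a).

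One genuine wrinkle: you invoke Lemma~\ref{thm_genus>2} twice, but that lemma is stated only for \emph{closed} $M$. In a cusped $M$ a horotorus is a legitimate closed embedded Bryant surface of genus~$1$; for it your cusp-exclusion step concludes (correctly, via the strong maximum principle) that $S$ coincides with a horotorus, and then $B(S)$ is the cusp end $T^2\times(0,\infty)$, which is \emph{not} a handlebody. So rather than ``contradicting genus $\ge 3$,'' the honest conclusion at that step is ``$S$ is a horotorus, a degenerate case to be excluded from the hypothesis.'' With that adjustment your argument goes through for all non-horotorus $S$.
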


\begin{proof}
By Lemma~\ref{lem_separating} that is stated later, $S$ separates $M$ into two components, and here $B(S)$ stands for the mean convex component. 
Suppose by contradiction that $\pi_1(S)\rightarrow \pi_1(B(S))$ is not surjective. Then the lift of $S$ to $\mathbb{H}^3$ consists of at least two disjoint Bryant surfaces, they are boundary components of a mean convex set in $\mathbb{H}^3$. But this violates the halfspace theorem in \cite{Rodriguez-Rosenberg}, which states that the connected set of $\mathbb{H}^3$ bounded by two disjoint embedded Bryant surfaces cannot be mean convex. Therefore, $\pi_1(S)\rightarrow \pi_1(B(S))$ is surjective. And since $\partial B(S)=S$, $B(S)$ is an open handlebody.
\end{proof}

\subsection{Proof of Theorem~\ref{thm_bryant area bound}}
Let $M$ be a closed hyperbolic $3$-manifold.
Suppose by contradiction that there exists a sequence of closed Bryant surfaces $S_n$ immersed in $M$ such that \begin{equation*}
    \frac{\area(S_n)}{g(S_n)}\rightarrow \infty\quad \text{as }n\rightarrow \infty,
\end{equation*}
where $g(S_n)$ is the genus of $S_n$. 
It follows from \eqref{equ_gaussian} that \begin{equation*}
    \fint_{S_n}\frac{|\mathring{A}|^2}{2}\,dA=-\fint_{S_n}K\,dA=\frac{4\pi(g(S_n)-1)}{\area(S_n)}\rightarrow 0.
\end{equation*}

\begin{Lemma}\label{lemma_total_curvature}
When $M$ is lifted to the universal cover $\mathbb{H}^3$, there exists a component of the associated lift of $S_n$ in $\mathbb{H}^3$, denoted by $\Tilde{S}_n$, such that for all $R>0$,
    \begin{equation*}
        \int_{\Tilde{S}_n\cap B(0,R)}|\mathring{A}|^2\,dA\rightarrow 0.
    \end{equation*}
\end{Lemma}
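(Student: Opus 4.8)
```latex
\textbf{Proof proposal.}
The goal is to extract, from each immersed closed Bryant surface $S_n$, a single component $\Tilde{S}_n$ of the lift to $\mathbb{H}^3$ whose traceless second fundamental form has total $L^2$-mass tending to zero on \emph{every} fixed ball $B(0,R)$. The plan is to run a pigeonhole/averaging argument on the fibers of the covering projection $\pi:\mathbb{H}^3\to M$, exploiting the uniform cocompactness of the deck transformation group (here I use crucially that $M$ is \emph{closed}, so the action of $\pi_1(M)$ on $\mathbb{H}^3$ is cocompact with a compact fundamental domain $\mathcal{F}$).

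First I would record the hypothesis already derived above, namely
\begin{equation*}
    \fint_{S_n}\frac{|\mathring{A}|^2}{2}\,dA=\frac{4\pi(g(S_n)-1)}{\area(S_n)}\longrightarrow 0,
\end{equation*}
which says the \emph{average} pointwise value of $|\mathring{A}|^2$ over $S_n$ goes to zero, even though the total $\int_{S_n}|\mathring{A}|^2\,dA=8\pi(g(S_n)-1)$ may blow up. Next I would transfer this averaged smallness upstairs. Fix $R>0$ and a point $x_0\in M$, and consider the full preimage $\pi^{-1}(S_n)$, a disjoint union of lifted components. The key quantity to control is $\int_{\pi^{-1}(S_n)\cap B(0,R)}|\mathring{A}|^2\,dA$, which I would compare to the downstairs integral by tiling $B(0,R)$ with finitely many translates $\gamma\mathcal{F}$ of the fundamental domain; since $|\mathring{A}|^2$ is $\pi_1(M)$-invariant and $B(0,R)$ meets at most $N(R)$ such tiles (with $N(R)$ depending only on $R$ and $M$), one gets
\begin{equation*}
    \int_{\pi^{-1}(S_n)\cap B(0,R)}|\mathring{A}|^2\,dA \;\leq\; N(R)\int_{S_n}|\mathring{A}|^2\,dA \;=\; N(R)\cdot 8\pi(g(S_n)-1).
\end{equation*}
This bound alone is useless (it diverges), so the real step is to divide by the number of components meeting $B(0,R)$ and choose the best one.

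The heart of the argument is therefore a counting/averaging step: let $m_n(R)$ be the number of distinct lifted components of $\pi^{-1}(S_n)$ that intersect $B(0,R)$. I would show $m_n(R)$ grows \emph{at least linearly} in the total area, i.e. $m_n(R)\gtrsim c(R)\,\area(S_n)$ for a constant $c(R)>0$ depending only on $R$ and $M$; this follows because the area of $\pi^{-1}(S_n)\cap B(0,R)$ equals (up to the tiling factor) $\frac{\area(S_n)}{\vol(M)}\vol(B(0,R))$, while each individual component contributes a uniformly bounded amount of area to $B(0,R)$ thanks to the curvature and local area estimates (Proposition~\ref{prop_curv_est_sep} together with the one-sided neighborhood bound), so the number of components must be large. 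Averaging the displayed inequality over these $m_n(R)$ components and using $m_n(R)\gtrsim c(R)\area(S_n)$ yields a component $\Tilde{S}_n(R)$ with
\begin{equation*}
    \int_{\Tilde{S}_n(R)\cap B(0,R)}|\mathring{A}|^2\,dA \;\leq\; \frac{N(R)\cdot 8\pi(g(S_n)-1)}{c(R)\,\area(S_n)} \;\longrightarrow\; 0,
\end{equation*}
where the right-hand side tends to zero precisely by the averaged-smallness hypothesis. Finally I would pass from the $R$-dependent choice to a single component valid for all $R$ by a diagonal argument over an exhaustion $R=1,2,3,\dots$, at each stage discarding finitely many components and using that a component small on a large ball is automatically small on every smaller concentric ball.

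The main obstacle I anticipate is the lower bound $m_n(R)\gtrsim c(R)\,\area(S_n)$ on the number of components meeting a fixed ball, and relatedly the uniform upper bound on the area each single component contributes to $B(0,R)$: controlling a single component requires the curvature estimates and the separation/mean-convex-neighborhood structure from Proposition~\ref{prop_curv_est_sep}, and one must be careful that the lifted pieces are genuinely distinct components (not the same embedded component re-entering the ball) so that the pigeonhole denominator is honest. A secondary subtlety is that the diagonalization must produce one fixed component rather than a component depending on $R$; this is handled by the nesting of balls, but requires verifying that the components selected at successive scales can be taken to be consistent.
```
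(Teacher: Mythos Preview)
Your skeleton is the same averaging/pigeonhole argument the paper uses (following Calegari--Marques--Neves): count the lifts meeting a fixed region, show there are at least $c\cdot\area(S_n)$ of them, and average the total $|\mathring{A}|^2$-mass over them. Two points, however, need correction.

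The serious gap is your justification of the per-component area bound. You write that ``each individual component contributes a uniformly bounded amount of area to $B(0,R)$ thanks to the curvature and local area estimates (Proposition~\ref{prop_curv_est_sep} together with the one-sided neighborhood bound).'' But Proposition~\ref{prop_curv_est_sep} gives constants $C(I_S),\epsilon(I_S),k(I_S)$ that depend on a \emph{lower bound for the injectivity radius of $S_n$}, and you have no such bound: indeed, since $\area(S_n)\to\infty$, the proof of Theorem~\ref{thm_area_bound} shows $I_{S_n}\to 0$. So the one-sided $\epsilon$-neighborhood does not give a uniform constant. The paper deals with exactly this: it invokes Lemma~\ref{thm_handlebody} to see that the mean-convex side of (the embedded) $S_n$ is a handlebody, passes to a piece $\partial\Delta_n$ that has genus zero with a controlled number of boundary components, and then \emph{reruns the full singularity analysis of Theorem~\ref{thm_area_bound}} on $\partial\Delta_n$ (classification of singular points, finiteness of $\mathcal{Q}$ via the volume bound $\vol(\Delta_n)\le\vol(\Delta)$, monotonicity) to obtain $\area(\phi(\tilde S_n)\cap\Delta)\le C$ uniformly in $n$ and $\phi$. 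This is the substantive step and it cannot be replaced by a bare citation of Proposition~\ref{prop_curv_est_sep}.

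A minor correction: your formula ``$\area(\pi^{-1}(S_n)\cap B(0,R))\approx \frac{\area(S_n)}{\vol(M)}\vol(B(0,R))$'' is not right and is not needed. For $R$ large enough that $B(0,R)\supset\mathcal{F}$, one simply has $\area(\pi^{-1}(S_n)\cap B(0,R))\ge\area(\pi^{-1}(S_n)\cap\mathcal{F})=\area(S_n)$, which is the lower bound you actually use.
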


If the lemma holds, then for $R_n\rightarrow \infty$, $\sup_{\Tilde{S}_n\cap B(0,R_n)} |\mathring{A}|$ converges to zero. Thus, due to the standard compactness result, $\Tilde{S}_n\cap B(0,R_n)$ converges smoothly on compact sets to a Bryant surface $\Tilde{S}$ with $|\mathring{A}|=0$ for all points. Therefore, $\Tilde{S}$ must be a horosphere. Moreover, the lemma indicates that the total Gaussian curvature of $S_n$ converges to zero. Thus by Gauss-Bonnet formula, when $n$ is sufficiently large, $S_n$ must be a torus, but this violates Lemma~\ref{thm_genus>2}.

\begin{proof}[Proof of Lemma \ref{lemma_total_curvature}]
We now prove the lemma following the strategy by Calegari, Marques, and Neves in \cite[Section 6]{Calegari-Marques-Neves}.

For any $\phi\in \Gamma:=\pi_1(M)$, denote by $\phi(\Tilde{S}_n)$ a component of the corresponding lift of $S_n$ in $\mathbb{H}^3$. Let $\Delta\subset \mathbb{H}^3$ represent a fundamental domain for $M$ whose boundary is transverse to $\phi(\Tilde{S}_n)$ if their intersection is non-empty. $\phi(\Tilde{S}_n)\cap\Delta$ bounds a mean convex domain $\phi(\Tilde{B}(S_n))\cap\Delta$. Since  $\phi(\Tilde{B}(S_n))\cap\Delta$ is not necessarily simply connected, consider the universal cover of $\phi(\Tilde{B}(S_n))\cap\Delta$ and denote the fundamental domain for the action of $\pi_1(\phi(\Tilde{B}(S_n))\cap\Delta)$ by $\Delta_n$. Then $\partial\Delta_n$ is a lift of $\phi(\Tilde{S}_n)\cap\Delta$, and hence $\area(\phi(\Tilde{S}_n)\cap\Delta)=\area(\partial \Delta_n)$.
By Lemma~\ref{thm_handlebody}, $\partial\Delta_n$ has genus equal to zero and with at most $g(S_n)$ boundary components, we can use the method of Theorem~\ref{thm_area_bound} to show that the area of $\partial \Delta_n$ is bounded by a constant depending only on $M$.

More specifically, repeating the proofs of Theorem~\ref{thm_area_bound} and Theorem~\ref{thm_compactness}, we conclude that, after passing to a subsequence, $\partial \Delta_n$ converges smoothly to a Bryant surface, away from a singular set $\mathcal{Q}$ and the boundary components of $\partial\Delta_n$. It then follows from the previous argument that all points in $\mathcal{Q}$ are catenoid singular points. 
Moreover, because each hole of $S_n$ corresponds to a hole in the handlebody on the mean convex side, for any closed essential curve in the handlebody that bounds a disk on the complementary component of $M\setminus S_n$, it corresponds to an annular end in $\partial \Delta_n$. 
As shown in Section~\ref{subsection_finiteness of singularities}, particularly by \eqref{equ_volume_lower}, each annulus bounds an open region on the mean convex side with a volume that is at least a fixed amount. Since $\vol(\Delta_n)=\vol(\phi(\Tilde{B}(S_n))\cap\Delta)\leq \vol(\Delta)$, the total volume is finite, 
there can only be finitely many annuli. Consequently, both the cardinality of singular set and the number of boundary components of $\partial\Delta_n$ are uniformly bounded. 
Finally, as in the calculations in Section~\ref{subsection_contradiction}, we deduce the existence of a constant $C>0$ independent of $n$ such that \begin{equation*}
        \area(\phi(\Tilde{S}_n)\cap \Delta)=\area(\partial\Delta_n)\leq C,\quad \forall \phi\in \Gamma.
    \end{equation*}
    
Let $\Pi_n$ be the image of $\pi_1(S_n)$ in $\Gamma$, we have \begin{equation}\label{equ_cmn_area_bound}
        \area(S_n)=\sum_{\phi\in \Gamma/\Pi_n}\area(\phi(\Tilde{S}_n)\cap \Delta)=\sum_{\phi\in \Gamma^{S_n}}\area(\phi(\Tilde{S}_n)\cap \Delta)\leq C\#\Gamma^{S_n},
    \end{equation}
where \begin{equation*}
    \Gamma^{S_n}=\left\{\phi\in\Gamma/\Pi_n: \phi(\Tilde{S}_n)\cap \Delta\neq \emptyset\right\}
\end{equation*}
is a non-empty set.
For $\epsilon>0$, define \begin{equation*}
    \Gamma^{S_n}(\epsilon,R)=\left\{\phi\in \Gamma^{S_n}: \int_{\phi(\Tilde{S}_n)\cap B(0,R)}|\mathring{A}|^2\,dA\leq \epsilon\right\}.
\end{equation*}
To prove the lemma, we need to show that for any $R$ and any sufficiently small $\epsilon$, $\Gamma^{S_n}(\epsilon,R)\neq \emptyset$.
According to \cite[Lemma 2.2]{Calegari-Marques-Neves}, for each $R$, there exists $m_R>0$, such that $B(0,R)\subset \cup_{|\phi|\leq m_R}\phi(\Delta)$. Let $C_R$ be the number of elements $\phi\in \Gamma$ with $|\phi|\leq m_R$, it is non-zero. We have the following inequality. 
\begin{align*}
    &C_R\int_{S_n}|\mathring{A}|^2\,dA\\
    \geq & \sum_{|\phi|\leq m_R}\sum_{\phi\in \Gamma/ \Pi_n}\int_{\phi(\Tilde{S}_n)\cap \Delta}|\mathring{A}|^2\,dA
    = \sum_{\phi\in \Gamma/ \Pi_n} \sum_{|\phi|\leq m_R}\int_{\phi(\Tilde{S}_n)\cap \Delta}|\mathring{A}|^2\,dA\\
    \geq & \sum_{\phi\in \Gamma/ \Pi_n}\int_{\phi(\Tilde{S}_n)\cap B(0,R)}|\mathring{A}|^2\,dA
    \geq \sum_{\phi\in \Gamma^{S_n}-\Gamma^{S_n}(\epsilon,R)}\int_{\phi(\Tilde{S}_n)\cap B(0,R)}|\mathring{A}|^2\,dA\\
    \geq  & \epsilon\, \#(\Gamma^{S_n}-\Gamma^{S_n}(\epsilon,R)).
\end{align*}
Thus, \begin{equation*}
    \frac{\#(\Gamma^{S_n}-\Gamma^{S_n}(\epsilon,R))}{\area(S_n)}\leq \frac{C_R}{\epsilon}\fint_{S_n}|\mathring{A}|^2\,dA\rightarrow 0\quad \text{as }n\rightarrow \infty.
\end{equation*}
Combining it with \eqref{equ_cmn_area_bound}, we obtain \begin{equation*}
    \frac{\#(\Gamma^{S_n}-\Gamma^{S_n}(\epsilon,R))}{\#\Gamma^{S_n}}\rightarrow 0 \quad \text{as }n\rightarrow \infty.
\end{equation*}
As a result, $\Gamma^{S_n}(\epsilon,R)$ must be non-empty. This proves the lemma.
\end{proof}

\subsection{Proof of Corollary~\ref{cor_bryant area bound}}
Now consider a finite-volume hyperbolic $3$-manifold $M$. In this setting, Lemma~\ref{thm_genus>2} may fail if $M$ has cusps. The reason is that, in the proof of the lemma, a horosphere may project to a horotorus in $M$, which is a closed Bryant surface of genus one. However, if we assume that the genus $g$ of a Bryant surface is not equal to one, then the rest of the argument still applies and shows that $g\geq 3$.

Therefore, it remains to verify Lemma~\ref{lemma_total_curvature} in the finite-volume case. In \cite{Calegari-Marques-Neves}, it was shown that on a closed hyperbolic $3$-manifold, among all metrics with sectional curvature less than or equal to $-1$, the minimal surface entropy (the exponential asymptotic growth of the number of essential minimal surfaces up to a given value of area) is minimized if and only if the metric is hyperbolic. In the most recent work by the author and Vargas Pallete \cite[Theorem B]{Jiang-VargasPallete}, this result was extended to finite-volume hyperbolic $3$-manifolds, under the assumptions that the metrics are bilipschitz equivalent to the hyperbolic metric and admit a uniform lower bound on sectional curvature. In particular, the total curvature estimates for closed surfaces in closed hyperbolic $3$-manifolds (Lemma~\ref{lemma_total_curvature}) can be generalized to ambient manifolds with cusps, which provides a key ingredient for proving the ``only if'' direction. Consequently, Corollary~\ref{cor_bryant area bound} follows.

\bibliographystyle{plain} 
\bibliography{ref}   
\end{document}